\newtheorem {Theorem}  {Theorem}
\newtheorem {Conjecture} {Conjecture}
\newtheorem {Problem} {Problem}
\begin{document}
\baselineskip = 15pt
\bibliographystyle{plain}

\title{A proof of Ollinger's conjecture: undecidability of tiling the plane\\ with a set of $8$ polyominoes}
\date{}
\author{Chao Yang\\ 
              School of Mathematics and Statistics\\
              Guangdong University of Foreign Studies, Guangzhou, 510006, China\\
              sokoban2007@163.com, yangchao@gdufs.edu.cn\\
              \\
        Zhujun Zhang\\
              Government Data Management Center of\\
              Fengxian District, Shanghai, 201499, China\\
              zhangzhujun1988@163.com
              }
\maketitle

\begin{abstract}
    We give a proof of Ollinger's conjecture that the problem of tiling the plane with translated copies of a set of $8$ polyominoes is undecidable. The techniques employed in our proof include a different orientation for simulating the Wang tiles in polyomino and a new method for encoding the colors of Wang tiles.
\end{abstract}

\noindent{\textbf{Keywords}}:
tiling, Wang tiles, polyomino, undecidability\\
MSC2020: 52C20, 68Q17

\section{Introduction} 

Aperiodicity and undecidability are two important and related phenomena regarding plane tiling. One of the earliest studies on aperiodicity and undecidability of tiling dates back to Wang tiles \cite{wang61} introduced by Hao Wang in 1961. A \textit{Wang tile} is a unit square with each edge assigned a color. Given a finite set of Wang tiles (see Figure \ref{fig_wang_set} for an example), Wang considered the problem of tiling the entire plane with translated copies from the set, under the conditions that the tiles must be edge-to-edge and the color of common edges of any two adjacent Wang tiles must be the same. This is known as \textit{Wang's domino problem}. Wang once conjectured that if a set of Wang tiles can tile the plane, then it can tile the plane periodically. If Wang's conjecture is true, then his domino problem is decidable.

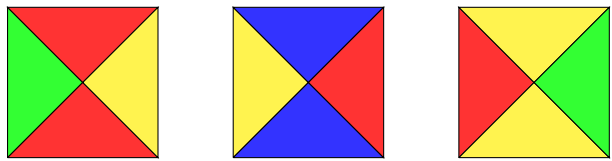
\begin{figure}[H]
\begin{center}
\begin{tikzpicture}

\draw [fill=green!80] (0,0)--(1,1)--(0,2)--(0,0);
\draw [fill=red!80] (0,0)--(2,0)--(0,2)--(2,2)--(0,0);
\draw [fill=yellow!80] (1,1)--(2,2)--(2,0)--(1,1);

\draw [fill=yellow!80] (3+0,0)--(3+1,1)--(3+0,2)--(3+0,0);
\draw [fill=blue!80] (3+0,0)--(3+2,0)--(3+0,2)--(3+2,2)--(3+0,0);
\draw [fill=red!80] (3+1,1)--(3+2,2)--(3+2,0)--(3+1,1);

\draw [fill=red!80] (6+0,0)--(6+1,1)--(6+0,2)--(6+0,0);
\draw [fill=yellow!80] (6+0,0)--(6+2,0)--(6+0,2)--(6+2,2)--(6+0,0);
\draw [fill=green!80] (6+1,1)--(6+2,2)--(6+2,0)--(6+1,1);

\end{tikzpicture}
\end{center}
\caption{A set of $3$ Wang tiles.}\label{fig_wang_set}
\end{figure}

Berge first found a set of Wang tiles that tiles the plane but only tiles the plane non-periodically. Such a set is called \textit{aperiodic}. By combining the facts of the existence of an aperiodic set of Wang tiles and the ability to simulate the Turing machine with Wang tiles, Berge showed that Wang's domino problem is undecidable.

\begin{Theorem}[\cite{b66}]
    Wang's domino problem is undecidable.
\end{Theorem}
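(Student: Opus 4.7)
The plan is to reduce the halting problem for Turing machines to Wang's domino problem, using the two ingredients highlighted in the paragraph preceding the statement: the existence of an aperiodic Wang tile set and a Turing-machine simulator built out of Wang tiles. Concretely, given a Turing machine $M$, I would construct a finite Wang tile set $T(M)$ such that $T(M)$ tiles the plane if and only if $M$ fails to halt on the blank input. Since this halting condition is recursively unsolvable, the same must be true of deciding whether $T(M)$ tiles the plane.

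For the simulator, I would design computation tiles whose horizontal rows encode instantaneous descriptions of $M$—tape symbols, together with an optional head/state marker on one cell—and whose vertical color constraints force each row to be the successor configuration of the row beneath it under the transition function of $M$. Halting would be encoded as a locally forbidden pattern, so any row containing a halting state obstructs the tiling from that point upward.

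The difficulty is that these computation tiles alone can be evaded by tiling the plane with an all-blank, no-head pattern, so one needs a scaffold that forces genuine computations to occur. Here I would superimpose an aperiodic tile set whose tilings organize the plane into a hierarchy of nested squares of unboundedly large side lengths. The corners and edges of these squares would be used to anchor an initial configuration on the bottom row of each square and to carry a simulated computation on the rows above for a number of steps proportional to the square's height. If $M$ halts, every sufficiently large square in the hierarchy must contain a blocked cell, so no tiling exists; if $M$ runs forever, consistent simulations can be placed at every scale and a tiling of the whole plane is extracted by a standard compactness argument.

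The main obstacle is engineering the interaction between the aperiodic scaffold and the computation layer so that the two coexist without conflict: the hierarchy must compel simulated runs at arbitrarily large scales while leaving those runs enough freedom to proceed whenever $M$ diverges. Coordinating the color alphabet, the matching rules on the scaffold, and the transition-encoding tiles so that this compatibility holds is the delicate combinatorial heart of the argument. Once it is carried out, the undecidability of Wang's domino problem follows immediately from the undecidability of the halting problem.
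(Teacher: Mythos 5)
Your outline is exactly the standard Berger--Robinson argument, and it matches the approach this paper attributes to the result: the paper itself gives no proof, citing Berger \cite{b66} and summarizing the method in one sentence as combining an aperiodic Wang tile set with Turing-machine simulation, which is precisely your plan (your hierarchy-of-nested-squares scaffold is Robinson's later simplification of Berger's construction). Your sketch correctly identifies and defers the genuinely hard part---the compatibility engineering between scaffold and computation layer---so it is a faithful reconstruction of the cited proof rather than a divergent route.
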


Berge's first aperiodic set of Wang tiles has more than $20000$ tiles. Smaller and smaller aperiodic sets of Wang tiles are found in the following decades \cite{c96,kari96,r71}. Finally, with the help of computers, Jeandel and Rao showed that there exists an aperiodic set of $11$ Wang tiles and any sets of $10$ or less Wang tiles are periodic \cite{jr21}.

The plane tiling problems can be considered in different settings, by allowing tiles of other shapes, different matching conditions, and more transformation types (rotation and reflection in addition to translation). Under these more general settings, researchers find aperiodic sets of even fewer number of tiles. Notably, Penrose first found an aperiodic set of two tiles \cite{p79}. More strikingly, a series of aperiodic monotile has been discovered by Smith, Myers, Kaplan, and Goodman-Strauss \cite{smith23a,smith23b}, which solves the long-standing einstein problem. For more aperiodic tiling sets and their applications in quasicrystal, we refer to the books \cite{bg13, gs16}.

\begin{figure}[h]
\begin{center}
\begin{tikzpicture}[scale=0.6]

\draw [ fill=red!80] (0,0)--(1,1)--(0,2)--(0,0);
\draw [ fill=yellow!80] (0,0)--(1,1)--(2,0)--(0,0);
\draw [ fill=blue!80] (2,0)--(1,1)--(2,2)--(2,0);
\draw [ fill=green!80] (0,2)--(1,1)--(2,2)--(0,2);
 
\draw [ fill=blue!80] (3,0)--(4,1)--(3,2)--(3,0);
\draw [ fill=green!80] (3,0)--(4,1)--(5,0)--(3,0);
\draw [ fill=orange!80] (5,0)--(4,1)--(5,2)--(5,0);
\draw [ fill=yellow!80] (3,2)--(4,1)--(5,2)--(3,2);

\draw [thick](8,0)--(11,0)--(11,-1)--(11.5,-1)--(11.5,0)--(12,0)--(12,1.5)--(13,1.5)--(13,3)--(12.5,3)--(12.5,2)--(12,2)--(12,4)--(9.5,4)--(9.5,3.5)--(8.5,3.5)--(8.5,4)--(8,4)--(8,1.5)--(9,1.5)--(9,0.5)--(8.5,0.5)--(8.5,1)--(8,1)--(8,0);

\draw [thick](14,0)--(14.5,0)--(14.5,-0.5)--(15.5,-0.5)--(15.5,0)--(18,0)--(18,0.5)--(18.5,0.5)--(18.5,1)--(18,1)--(18,4)--(17.5,4)--(17.5,3)--(17,3)--(17,4)--(14,4)--(14,2)--(14.5,2)--(14.5,3)--(15,3)--(15,1.5)--(14,1.5)--(14,0);

\end{tikzpicture}
\end{center}
\caption{Wang tiles simulated by polyominoes.} \label{fig_golomb}
\end{figure}
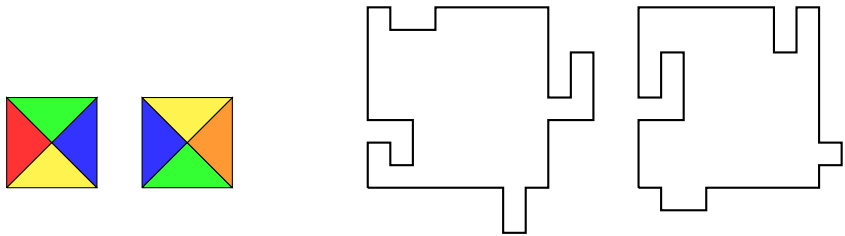

In this paper, we focus on translational tiling problems of the plane where only translated copies can be used and the tiles are matched by shapes only. In particular, we study the plane tiling problem with translated copies of a finite set of polyominoes. It has been shown by Golomb \cite{g70} that Wang's domino problem can be reduced to the problem of tiling with a set of polyominoes. Golomb's reduction method can be illustrated in Figure \ref{fig_golomb}, where a set of Wang tiles is emulated by a set of polyominoes with the same number. The color of Wang tiles can be simulated by bumps and dents added to each side of a large square polyomino. Therefore, the problem of tiling with a set of polyominoes is undecidable in general. Ollinger considered the following problem in which the number of polyominoes is fixed.

\begin{Problem}[$k$-Polyomino Tiling Problem] 
For a fixed positive integer $k$, is there an algorithm to decide whether a set of $k$ polyominoes can tile the plane by translated copies from the set?    
\end{Problem}

For $k=1$, it is known that if a single polyomino tiles the plane, then it can tile the plane periodically \cite{bn91}. So the $1$-polyomnino tiling problem is decidable, and there is a fast algorithm to decide whether a polyomino tiles the plane \cite{w15}. On the other hand, Ollinger initiated the study on the undecidability of a set of a fixed number of polyominoes and proved the following result.

\begin{Theorem}[\cite{o09}]
    The $11$-polyomino tiling problem is undecidable. 
\end{Theorem}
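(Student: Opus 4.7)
The plan is to reduce Wang's domino problem, known undecidable by Berger's theorem cited earlier, to the $11$-polyomino tiling problem. Given an arbitrary finite set $T$ of Wang tiles using $n$ colors, I would construct a set $\mathcal{P}(T)$ of exactly $11$ polyominoes (whose shapes may depend on $T$, but whose number does not) with the property that $\mathcal{P}(T)$ admits a translational tiling of the plane if and only if $T$ does. Since the number of polyominoes is fixed, any algorithm for the $11$-polyomino tiling problem would then decide the domino problem, a contradiction.

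The overall strategy is to refine Golomb's reduction of Figure~\ref{fig_golomb}, which uses $|T|$ polyominoes, by factoring the job of each Golomb tile into two parts. First, I would design a single \emph{frame} polyomino $F$, a large pseudo-square whose boundary has a prescribed pattern of cavities on its four sides, engineered so that $F$ can only pack as a square grid and so that the cavity between two adjacent copies of $F$ forms a corridor of a fixed shape. Second, I would introduce a small collection of \emph{color encoder} polyominoes that live inside these corridors: two \emph{bit} pieces representing $0$ and $1$, so that a word of $\lceil \log_2 n\rceil$ consecutive bits sitting in a corridor encodes one of the $n$ colors of $T$. To prevent bits from drifting along or across corridors, I would add a handful of \emph{orientation markers} and \emph{delimiter} pieces that anchor the encoding to the correct slot on the correct frame edge, distinguish horizontal from vertical edges, and force the codeword on one side of a shared edge to agree with the codeword on the other side exactly when the corresponding pair of Wang colors in $T$ match. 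A careful count of the frame, the two bit polyominoes, and the markers needed for all four edge types should land at $11$.

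Correctness then splits into two directions. The easy direction is that any valid Wang tiling of the plane by $T$ yields a tiling by $\mathcal{P}(T)$: place one frame at each lattice position, then, along each shared edge, fill the corridor with bit and delimiter polyominoes that spell the common color. The hard direction is to show that every translational tiling of the plane by $\mathcal{P}(T)$ arises from such a Wang tiling. This requires a rigidity argument: the cavity pattern of $F$ must force the frames to sit on a common square grid, and the shapes of the delimiter and bit polyominoes must force each corridor to be completely filled by a pair of encoded codewords that coincide, so that reading off one color per edge yields a legal Wang tiling.

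The main obstacle will be the backward direction of correctness. With only $11$ polyominoes, several copies of the same shape can combine in many unintended configurations, for instance shifting one row of frames by a half period, rotating the intended role of horizontal and vertical corridors, or bridging across two different corridors with bit pieces. Ruling out these parasitic tilings is where most of the design effort goes: the notches on $F$ must be asymmetric enough to break any global shift, the delimiters must make each corridor slot uniquely identifiable, and the bit pieces must fit into exactly one side of a corridor at a time. I would expect this to require a patient case analysis classifying, for each polyomino in $\mathcal{P}(T)$, the possible local neighborhoods that can appear in any plane tiling, and then pruning these by propagation until only the intended grid-plus-corridor configuration survives.
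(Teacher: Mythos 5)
There is a genuine gap, and it is fatal to the reduction as you have set it up. Your construction has no mechanism enforcing that the four codewords surrounding a single frame form a tile of $T$. The frame $F$ is one fixed polyomino, so every copy of it presents the identical cavity pattern on all four sides; the colors are carried entirely by the free choice of bit pieces placed in the corridors. Consequently the only constraint your gadgets impose is pairwise agreement of codewords across each shared edge --- and that constraint is trivially satisfiable (spell the same color in every corridor of the plane), whatever $T$ is. So $\mathcal{P}(T)$ would tile the plane for \emph{every} nonempty tile set $T$, and the backward direction of correctness fails before any parasitic-tiling analysis even begins. The difficulty is not, as you anticipate, ruling out shifted or rotated packings of $F$; it is that encoding colors by loose pieces in corridors decouples the four edges of a cell from one another, whereas a Wang tile is precisely a \emph{correlation} among its four edge colors. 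Golomb's original reduction achieves this correlation for free because each Wang tile gets its own polyomino, with all four bump/dent patterns hard-wired into one shape; once you collapse to a fixed number of polyominoes, you must re-create that correlation by some selection mechanism.

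This is exactly what Ollinger's construction (which this paper follows and refines) supplies and your proposal lacks. There, a single \emph{meat} polyomino encodes the entire tile set: all $|T|$ simulated Wang tiles are laid out in sequence along one long polyomino, each segment carrying the dent-encoded colors of one genuine tile of $T$. A large \emph{jaw} polyomino, via $T$-shaped dents matching $T$-shaped bumps on the meat, forces every meat to be swallowed by two jaws in such a way that exactly one of the $|T|$ segments is left exposed --- so the four colors presented to the neighbors are guaranteed to be the four colors of one legal tile, with the choice of which tile left free. \emph{Links} (your bit-carrier analogues) then enforce color agreement between exposed segments of adjacent meats, and \emph{fillers} and \emph{teeth} plug the residual holes; the count $1+1+1+4+4$ gives $11$. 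If you want to salvage your frame-and-corridor architecture, you would need to replace the single frame by something playing the meat's role --- a piece that physically carries all admissible color quadruples of $T$ and a companion piece that deterministically masks all but one of them --- at which point you have essentially rediscovered the meat--jaw mechanism.
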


Based on the existence of an aperiodic set of $8$ polyominoes \cite{ags92}, Ollinger made the following conjecture \cite{o09}.

\begin{Conjecture}[Ollinger's conjecture\cite{o09}]
     The $k$-polyomino tiling problem is undecidable, for $k=8,9,10$.
\end{Conjecture}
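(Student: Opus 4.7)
The plan is to reduce Wang's domino problem, known to be undecidable by the theorem above, to the $k$-polyomino tiling problem. I focus on the tightest case $k=8$; the cases $k=9,10$ are then to be handled by augmenting the $8$-polyomino set with one or two additional ``inert'' polyominoes chosen so that the augmented set tiles the plane if and only if the original $8$-polyomino set does (for instance, extra pieces that cannot appear in any plane tiling compatible with the structural pieces, or whose forced placement does not interfere with the simulated Wang grid).

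Given an arbitrary finite set $W$ of Wang tiles with $n$ colors, I will construct $8$ polyominoes $P_1,\ldots,P_8$ (whose shapes depend on $W$ but whose cardinality does not) such that $\{P_1,\ldots,P_8\}$ admits a translational tiling of the plane if and only if $W$ does. The two innovations flagged in the abstract drive the construction. First, a \emph{diagonal} orientation: each simulated Wang tile is represented by a roughly diamond-shaped region whose four sides run at $45^\circ$ to the coordinate axes, rather than by the axis-aligned square of Golomb's emulation. This reorientation frees the horizontal and vertical directions to host auxiliary structural pieces and produces much tighter local constraints between adjacent cells, which is what makes it possible to drop below Ollinger's $11$. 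Second, a \emph{combinatorial color encoding}: instead of dedicating one polyomino per color, I use a short binary pattern of bumps and dents along each diagonal to encode a color from $\{1,\ldots,n\}$, realized by just a fixed constant number of universal ``bit-carrier'' polyominoes. With a few structural polyominoes (a body piece and some corner/filler pieces) plus these bit-carriers, the grand total can be tuned to exactly $8$, independent of $|W|$ and $n$.

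The forward direction of the reduction --- a valid Wang tiling of the plane yields a polyomino tiling --- is routine: one places a body polyomino in the diagonal orientation at each simulated cell and plugs in the appropriate bit-carrier polyominoes along each diagonal edge to spell out the corresponding Wang colors. The reverse direction is the main technical hurdle: every translational plane tiling by $\{P_1,\ldots,P_8\}$ must in fact decode into a valid Wang tiling of $W$. The strategy for this rigidity argument is to identify local features on the body polyomino (for example a unique asymmetric notch at a distinguished corner) whose only legal covering forces the placement of its neighbors, propagate this by induction to obtain a global diagonal grid, and then argue that along each shared diagonal the bit-carrier polyominoes can fit only in ways whose encoded bits agree on both sides --- recovering the Wang edge-matching condition.

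The hard part, which I expect to consume most of the work, is ruling out spurious tilings: sheared or offset alignments in which the body polyominoes fail to assemble into a single coherent diagonal lattice; fault-line tilings in which two consistent grids meet along a seam; and degenerate placements of bit-carriers that could be decoded as inconsistent colors on the two sides of a diagonal. The role of the new diagonal orientation is precisely to make these exclusions tractable with only eight polyominoes by turning local matching into a problem about rigid $45^\circ$ interfaces rather than the more flexible axis-aligned bumps and dents of Golomb's construction. Once this case analysis is completed, the reduction is effective and Wang undecidability transfers to $k=8$, hence by the padding remark to $k=9,10$ as well.
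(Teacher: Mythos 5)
Your overall plan---reduce from Wang's domino problem with a fixed-cardinality polyomino set whose shapes depend on $W$, then handle $k=9,10$ by padding---matches the paper's framework, and your binary color encoding guess is close to what the paper actually does (it uses $t=\lceil\log_2 m\rceil$ bits per color, realized as ``normal'' versus ``deeper'' dents matched by just two universal link polyominoes). But there is a central missing idea that makes your reduction fail as stated: you have no mechanism forcing the four colors around a simulated cell to form a tile of $W$. In your sketch a single generic ``body'' polyomino sits at each cell and universal bit-carriers ``spell out'' the colors along its edges; since the carriers are universal and the body is one fixed shape, every color quadruple is spellable, so the polyomino set tiles the plane regardless of whether $W$ does, and the reduction collapses. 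The paper inherits Ollinger's solution to exactly this problem: the \emph{meat} polyomino encodes the entire list of all $n$ Wang tiles, with their specific colors, as dents along its boundary, and the \emph{jaw} polyomino (via $T$-shaped dents matching $T$-shaped bumps on the meat) engulfs all but exactly one simulated Wang tile, so that the colors exposed to the links are precisely those of one legitimate tile of $W$. Some selection gadget of this kind is indispensable for keeping the tile count fixed while restricting color combinations to the finite set $W$, and nothing in your proposal plays that role.

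Two smaller points. First, you have the orientation innovation backwards: the $45^\circ$ diagonal placement of simulated Wang tiles is the \emph{old} technique, used in Ollinger's $11$-polyomino proof and the first author's $10$- and $9$-polyomino improvements; the paper's new idea is to \emph{remove} the rotation so that the simulated Wang tiles are axis-aligned with the polyominoes, which (together with the binary dent encoding) is what cuts the links from $4$ to $2$ and the teeth from $4$ to $3$, reaching $8 = 1$ meat $+\,1$ jaw $+\,1$ filler $+\,3$ teeth $+\,2$ links. Pursuing your diagonal route would most likely reproduce the known constructions that bottom out at $9$. Second, your rigidity argument is only announced, not given; the paper discharges it by concrete local arguments (bump/dent dead-end analysis as in its Figure 9, plus the jaw's capacity bound of at most $n-1$ engulfed simulated tiles) forcing the periodic meat--jaw--filler--link pattern, and that case analysis is the bulk of the actual proof.
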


Obviously, to prove Ollinger's conjecture, it suffices to prove the case $k=8$. The previous work of the first author settled the conjecture for the $k=9,10$ cases \cite{yang23,yang23b}. This paper resolves Ollinger's conjecture completely by proving the following theorem.

\begin{Theorem}\label{thm_main}
   The $8$-polyomino tiling problem is undecidable. 
\end{Theorem}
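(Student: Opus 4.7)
The plan is to give an explicit computable reduction from Wang's domino problem, which is undecidable by Berge's theorem, to the $8$-polyomino tiling problem. Given an arbitrary finite Wang tile set $T$, I will produce a set $P(T)$ consisting of exactly $8$ polyominoes such that $P(T)$ tiles the plane by translation if and only if $T$ tiles the plane as Wang tiles. Because the cardinality of $P(T)$ is fixed at $8$ independently of $T$, this immediately implies Theorem \ref{thm_main}.

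The construction will load all the data of $T$ (its color alphabet and its list of admissible tiles) into the shapes of the $8$ polyominoes rather than into their count. Unlike Golomb's classical simulation, which uses one bumpy square per Wang tile and therefore one polyomino per Wang tile, the Wang grid will here be simulated diagonally: the would-be Wang cells are bounded by four diagonal seams instead of axis-aligned edges. This is the ``different orientation'' promised by the abstract. Along those seams, colors are written in a new bump/dent pattern, which I plan to make essentially binary so that $|T|$-many colors require only a bounded number of geometric slots. Concretely, the $8$ polyominoes will consist of a single large ``body'' polyomino whose outline carries four color-slots and geometric markers, plus seven smaller companions: some are ``encoders'' that plug into the color-slots and represent individual color letters, and the rest are ``fillers'' that seal the interface between two adjacent bodies and transmit the color-matching constraint. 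The diagonal orientation is essential: it makes every body-to-body interface look the same, so that the same small filler pieces handle every interface regardless of the two colors involved, and the color test is deferred entirely to the encoders.

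The forward direction of the reduction is routine: from a valid Wang tiling of the plane by $T$, place one body at each Wang cell of the diagonal grid, plug in the encoders prescribed by the Wang tile assigned to that cell, and insert the fillers at each interface; the Wang color-matching condition becomes a shape-matching condition between paired encoders. The backward direction is where the substance lies. I plan to prove it by a chain of forcing lemmas: (i) in any tiling of the plane by $P(T)$, the body polyomino occurs in only one translational class, forced by its boundary markers; (ii) the bodies then align into a diagonal grid, because the fillers have no legal placement that crosses or skips bodies; (iii) each color-slot of each body is filled by exactly one encoder, yielding a well-defined assignment of Wang tiles to cells; and (iv) the two encoders meeting across any interface must represent the same color, so the induced assignment is a legal Wang tiling of $T$.

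The main obstacle, as in Ollinger's original $11$-tile proof and in the author's earlier $9$- and $10$-tile refinements, will be ruling out \emph{parasitic} tilings: geometrically legal tilings of the plane by $P(T)$ that do not correspond to any Wang tiling. Compressing the count from $9$ down to $8$ removes one full polyomino's worth of geometric degrees of freedom, so spurious matings between encoders, fillers, and the body become much harder to preclude; the shape of every notch must be chosen so that no unintended pair of pieces fits together. I expect the bulk of the work to be a careful design of the precise jaggedness of each of the $8$ polyominoes, followed by a finite but delicate case analysis enumerating every locally possible interface and checking that each one either is the intended mating or is geometrically impossible. Once this combinatorial rigidity is established, steps (i)--(iv) close up and the reduction, and with it Theorem \ref{thm_main}, is proved.
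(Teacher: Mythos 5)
Your proposal has the right outer shell (a computable reduction from Wang's domino problem with a tile count fixed at $8$), but the internal architecture has a fatal gap: nothing in it restricts the four colors appearing around a ``body'' to form an \emph{admissible} Wang tile of $T$. You propose one body shape per cell, with color data supplied by separate ``encoder'' pieces plugged independently into four slots. Since $|P(T)|=8$ is fixed while both the number of tiles $n$ and the number of colors $m$ are unbounded, the encoders can be neither per-tile nor per-color; and if, as you suggest, colors are written in binary with each digit carried by a small reusable piece, then every digit string is available at every slot, so any tiling of $P(T)$ only certifies that each cell carries \emph{some} four-tuple of colors matching its neighbors --- i.e.\ it simulates the complete tile set over the color alphabet, which always tiles the plane. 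The backward direction (your step (iv)) then produces a color-matched assignment but not a tiling by $T$, and the reduction fails. This is precisely the problem Ollinger's framework, which the paper follows, is built to solve: the entire list of $n$ tiles is carved \emph{in sequence} into the boundary of a single ``meat'' polyomino, and a large ``jaw'' polyomino, mated to the meat by $T$-shaped bumps and dents, engulfs all but exactly one of the $n$ simulated tiles; the meat's freedom to sit in the jaws in $n$ positions is the selection mechanism, and only the exposed tile's color dents (binary-encoded as ``normal'' versus ``deeper'' dents) are matched to neighboring meats by two ``link'' polyominoes, with ``teeth'' and a ``filler'' sealing the interior. Your proposal contains no analogue of the jaw, and without a selection mechanism of this kind no choice of ``jaggedness,'' however delicate, can recover soundness.

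A secondary point: your guess about the orientation runs backwards. The diagonal ($\pi/4$-rotated) simulation of the Wang grid is the \emph{old} setup used by Ollinger's $11$-polyomino proof and the first author's $9$- and $10$-polyomino refinements; the innovation in this paper is the opposite, namely aligning the simulated Wang tiles with the same horizontal and vertical axes as the polyominoes, which together with the normal/deeper-dent encoding is what cuts the teeth from $4$ to $3$ and the links from $4$ to $2$. Finally, note that even granting your architecture, your steps (i)--(iv) are announced as a plan (``I expect the bulk of the work\dots''), so the rigidity analysis that constitutes the substance of the paper's proof is deferred rather than carried out.
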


It is worth mentioning that Greenfeld and Tao show that translational tiling problems can be undecidable for one or two tiles in certain spaces other than the plane \cite{gt23}. See Table 1 of \cite{gt23} for a relatively comprehensive summary of previous results on the numbers of tiles needed to obtain aperiodicity or undecidability for translational tilings. For the plane, our result is the current record holder of the smallest size of set of tiles whose corresponding translational tiling problem is undecidable.
    
To prove Theorem \ref{thm_main}, we incorporate several new techniques in reducing Wang's domino problem to the $k$-polyomino tiling problem. In the proof of Ollinger's original result on $11$ polyominoes as well as the proof of improvements to $10$ or $9$ polyominoes by the first author, there is an angle of $\pi/4$ between the orientations of the simulated Wang tiles and the polyominoes. In this paper, we change the way of simulating Wang tiles such that the orientations of the simulated Wang tiles and the polyominoes are the same, namely they both align to the same set of horizontal and vertical lines of the plane. Besides the difference in orientation of the simulated Wang tiles, we also introduce new methods for encoding the colors of the Wang tiles in our proof.

The rest of the paper is organized as follows. Section \ref{sec_main} proves our main result. Section \ref{sec_con} concludes with remarks on future work.

\section{Proof of Theorem \ref{thm_main}}\label{sec_main}

\begin{proof}[Proof of Theorem \ref{thm_main}] 
We prove the theorem by reduction from Wang's domino problem. We follow the framework of Ollinger \cite{o09} in the reduction. One of the key ideas in Ollinger's framework is to encode an arbitrary set of Wang tiles in one polyomino, which is called the meat polyomino. As a result, a set of an arbitrary number of Wang tiles is simulated by a fixed number of polyominoes. In Ollinger's original proof, a total of $11$ polyominoes are needed. They are a meat, a jaw, a filler, $4$ teeth, and $4$ links. We follow Ollinger's convention in naming the polyominoes, except that links are called wires by Ollinger \cite{o09}. With some innovation built on Ollinger's framework, we will show that $8$ polyominoes suffice to do this job. The $8$ polyominoes in our construction are a meat, a jaw, a filler, $3$ teeth, and $2$ links, with a decrease in the number for both teeth and links.

 Except the teeth polyominoes, all other $5$ polyominoes in our set can be built upon blown-up polyominoes by a factor of $9$, which we will refer to as \textit{base} polyominoes. In other words, the base polyomino is built by usually a lot of $9\times 9$ square polyominoes. Then bumps and dents are added to the sides of some of the $9\times 9$ squares to form our final meat, jaw, filler, and link polyominoes. The teeth polyominoes are tiny with a size smaller than a $9\times 9$ square, and they are used to fill the gaps between the dents of other polyominoes in the tiling. We take the set of $3$ Wang tiles illustrated in Figure \ref{fig_wang_set} as an example to show how the $8$ polyominoes are constructed. We will give complete and detailed illustrations of all the $8$ polyominoes corresponding to this set of $3$ Wang tiles. For a general set of $n$ Wang tiles with a total number of $m$ different colors, we will use boundary words to describe the base polyominoes.

 The single \textit{meat} polyomino to simulate the set of $3$ Wang tiles of Figure \ref{fig_wang_set} is illustrated in Figure \ref{fig_meat}. Note that there are $4$ different colors in this set of Wang tiles, so $2$ bits is sufficient to encode the colors. In general, if a set of Wang tiles has a total number of $m$ different colors, we need $t=\lceil \log_2 m \rceil$ bits for encoding the colors. Note also that each small gray square in Figure \ref{fig_meat} represents a $9\times 9$ square (the same applies to Figure \ref{fig_jaw}, Figure \ref{fig_jaw_zoom_in} and Figure \ref{fig_filler}), so the boundary word for the base polyomino of this meat polyomino is (starting from the upper left corner and going counterclockwise)
 $$d^9 \Bigl(d^9r^9(d^9)^3(r^9)^2(d^9)^2(r^9)^3\Bigr)^3 u^9 r^9 \Bigl((u^9)^3 (l^9)^2(u^9)^2 (l^9)^3 u^9l^9\Bigr)^3 l^9.$$

There are $T$-shape bumps added to the base polyomino, in different orientations. For example, a $T$-shape bump on a north side (see the orange line segments in Figure \ref{fig_meat}) added to the base polyomino can be described by the boundary word $l^4ur(ul)^2d^3l^4$ (replacing the original side $l^9$ of length $9$). The $T$-shape bumps are to be matched with the $T$-shape dents in the jaw polyomino which will be introduced later.

Besides the bumps, there are two kinds of dents (up to different orientations) added to the base polyomino. One kind of dents is $1$ unit deeper than the other, so we refer to them as a \textit{normal dent} and a \textit{deeper dent}, respectively. On the north side, the boundary words for the dents are either $dlul d^2l^2drdl^3uru l^2u^2ldlu$ or $d^2lul d^2l^2drdl^3uru l^2u^2ldlu^2$, where the latter one is deeper. The dents in the meat polyomino are used to encode the colors of Wang tiles. As we have mentioned earlier, for the set of Wang tiles in Figure \ref{fig_wang_set}, we have to put two dents side by side in Figure \ref{fig_meat} to encode the $4$ colors. The $3$ Wang tiles are simulated in sequence in the meat, from northwest to southeast, with the first Wang tile simulated inside the area enclosed by the dashed lines illustrated in Figure \ref{fig_meat}. The segments of the boundary of the base polyomino of the meat before the normal or deeper dents are added are shown in the color which the dents are encoding.

Next, we explain in more detail how the colors are encoded. Suppose that the $4$ colors red, green, blue, and yellow are encoded by $00$, $01$, $10$, and $11$, respectively. We read the codes encoded by the dents of the meat from left to right, or from top to bottom, depending on whether the dents are on horizontal sides or vertical sides. A normal dent encodes $0$ when it appears on the north side or the west side, and encodes $1$ when on the south or east sides. A deeper dent encodes $1$ when on the north or west sides, and encodes $0$ when on the south or east sides. The reason for this way of encoding is that we will match the dents of the north side and the south (or west and east) by the link polyominoes introduced later. The link polyomino can only match a normal dent with a deeper dent.


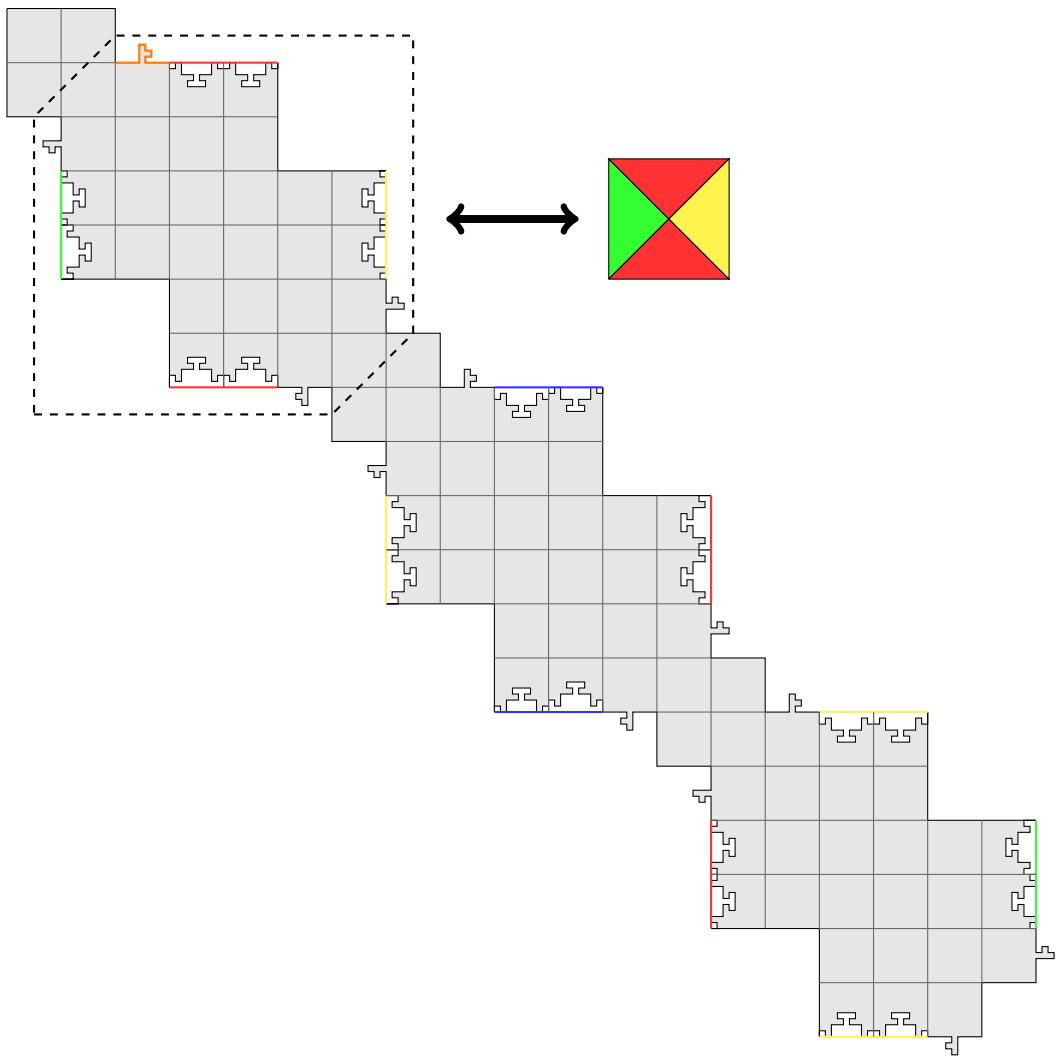
\begin{figure}[H]
\begin{center}
\begin{tikzpicture}[scale=0.08]

\draw [fill=gray!20] (0,63)--(18,63)
--(18,54)--(27-5,66-12)--(27-5,69-12)--(28-5,69-12)--(28-5,68-12)--(29-5,68-12)--(29-5,67-12)--(28-5,67-12)--(28-5,66-12)--(27,54)
--(34-7,66-12)--(34-7,65-12)--(35-7,65-12)--(35-7,66-12)--(36-7,66-12)--(36-7,64-12)--(38-7,64-12)--(38-7,63-12)--(37-7,63-12)--(37-7,62-12)--(40-7,62-12)--(40-7,63-12)--(39-7,63-12)--(39-7,64-12)--(41-7,64-12)--(41-7,66-12)--(42-7,66-12)--(42-7,65-12)--(43-7,65-12)
--(45-9,65-12)--(46-9,65-12)--(46-9,66-12)--(47-9,66-12)--(47-9,64-12)--(49-9,64-12)--(49-9,63-12)--(48-9,63-12)--(48-9,62-12)--(51-9,62-12)--(51-9,63-12)--(50-9,63-12)--(50-9,64-12)--(52-9,64-12)--(52-9,66-12)--(53-9,66-12)--(53-9,65-12)--(54-9,65-12)--(54-9,66-12)
--(45,54)--(45,36)--(63,36)
--(77-14,43-7)--(1+75-14,43-7)--(1+75-14,42-7)--(1+76-14,42-7)--(1+76-14,41-7)--(1+74-14,41-7)--(1+74-14,39-7)--(1+73-14,39-7)--(1+73-14,40-7)--(1+72-14,40-7)--(1+72-14,37-7)--(1+73-14,37-7)--(1+73-14,38-7)--(1+74-14,38-7)--(1+74-14,36-7)--(1+76-14,36-7)--(1+76-14,35-7)--(1+75-14,35-7)--(1+75-14,34-7)--(77-14,34-7)
--(77-14,32-5)--(1+75-14,32-5)--(1+75-14,31-5)--(1+76-14,31-5)--(1+76-14,30-5)--(1+74-14,30-5)--(1+74-14,28-5)--(1+73-14,28-5)--(1+73-14,29-5)--(1+72-14,29-5)--(1+72-14,26-5)--(1+73-14,26-5)--(1+73-14,27-5)--(1+74-14,27-5)--(1+74-14,25-5)--(1+76-14,25-5)--(1+76-14,24-5)--(1+75-14,24-5)--(1+75-14,23-5)--(77-14,23-5)
--(63,18)
--(77-14,17-3)--(78-14,17-3)--(78-14,18-3)--(79-14,18-3)--(79-14,17-3)--(80-14,17-3)--(80-14,16-3)--(77-14,16-3)
--(63,9)--(72,9)
--(72,0)
--(93-17,0)--(93-17,3)--(94-17,3)--(94-17,2)--(95-17,2)--(95-17,1)--(94-17,1)--(94-17,0)
--(81,0)
--(100-19,0)--(100-19,-2)--(101-19,-2)--(101-19,-1)--(102-19,-1)--(102-19,-3)--(104-19,-3)--(104-19,-4)--(103-19,-4)--(103-19,-5)--(106-19,-5)--(106-19,-4)--(105-19,-4)--(105-19,-3)--(107-19,-3)--(107-19,-1)--(108-19,-1)--(108-19,-2)--(109-19,-2)--(109-19,0)
--(111-21,0)--(111-21,-1)--(112-21,-1)--(112-21,0)--(113-21,0)--(113-21,-2)--(115-21,-2)--(115-21,-3)--(114-21,-3)--(114-21,-4)--(117-21,-4)--(117-21,-3)--(116-21,-3)--(116-21,-2)--(118-21,-2)--(118-21,0)--(119-21,0)--(119-21,-1)--(120-21,-1)--(120-21,0)
--(99,0)--(99,-18)--(117,-18)
--(40+77,43-61)--(40+75,43-61)--(40+75,42-61)--(40+76,42-61)--(40+76,41-61)--(40+74,41-61)--(40+74,39-61)--(40+73,39-61)--(40+73,40-61)--(40+72,40-61)--(40+72,37-61)--(40+73,37-61)--(40+73,38-61)--(40+74,38-61)--(40+74,36-61)--(40+76,36-61)--(40+76,35-61)--(40+75,35-61)--(40+75,34-61)--(40+77,34-61)
--(40+77,32-59)--(40+75,32-59)--(40+75,31-59)--(40+76,31-59)--(40+76,30-59)--(40+74,30-59)--(40+74,28-59)--(40+73,28-59)--(40+73,29-59)--(40+72,29-59)--(40+72,26-59)--(40+73,26-59)--(40+73,27-59)--(40+74,27-59)--(40+74,25-59)--(40+76,25-59)--(40+76,24-59)--(40+75,24-59)--(40+75,23-59)--(40+77,23-59)
--(117,-36)
--(40+77,17-57)--(40+78,17-57)--(40+78,18-57)--(40+79,18-57)--(40+79,17-57)--(40+80,17-57)--(40+80,16-57)--(40+77,16-57)
--(117,-45)
--(126,-45)--(126,-54)
--(37+93,0-54)--(37+93,3-54)--(37+94,3-54)--(37+94,2-54)--(37+95,2-54)--(37+95,1-54)--(37+94,1-54)--(37+94,0-54)
--(135,-54)
--(101+34,66-120)--(101+34,65-121)--(101+35,65-121)--(101+35,66-121)--(101+36,66-121)--(101+36,64-121)--(101+38,64-121)--(101+38,63-121)--(101+37,63-121)--(101+37,62-121)--(101+40,62-121)--(101+40,63-121)--(101+39,63-121)--(101+39,64-121)--(101+41,64-121)--(101+41,66-121)--(101+42,66-121)--(101+42,65-121)--(101+43,65-121)--(101+43,66-120)
--(99+45,66-120)--(99+45,65-121)--(99+46,65-121)--(99+46,66-121)--(99+47,66-121)--(99+47,64-121)--(99+49,64-121)--(99+49,63-121)--(99+48,63-121)--(99+48,62-121)--(99+51,62-121)--(99+51,63-121)--(99+50,63-121)--(99+50,64-121)--(99+52,64-121)--(99+52,66-121)--(99+53,66-121)--(99+53,65-121)--(99+54,65-121)--(99+54,66-120)
--(153,-54)
--(153,-72)--(171,-72)
--(94+77,43-115)--(94+75,43-115)--(94+75,42-115)--(94+76,42-115)--(94+76,41-115)--(94+74,41-115)--(94+74,39-115)--(94+73,39-115)--(94+73,40-115)--(94+72,40-115)--(94+72,37-115)--(94+73,37-115)--(94+73,38-115)--(94+74,38-115)--(94+74,36-115)--(94+76,36-115)--(94+76,35-115)--(94+75,35-115)--(94+75,34-115)--(94+77,34-115)
--(94+77,32-113)--(95+75,32-113)--(95+75,31-113)--(95+76,31-113)--(95+76,30-113)--(95+74,30-113)--(95+74,28-113)--(95+73,28-113)--(95+73,29-113)--(95+72,29-113)--(95+72,26-113)--(95+73,26-113)--(95+73,27-113)--(95+74,27-113)--(95+74,25-113)--(95+76,25-113)--(95+76,24-113)--(95+75,24-113)--(95+75,23-113)--(94+77,23-113)
--(171,-90)
--(94+77,17-111)--(94+78,17-111)--(94+78,18-111)--(94+79,18-111)--(94+79,17-111)--(94+80,17-111)--(94+80,16-111)--(94+77,16-111)
--(171,-99)--(162,-99)--(162,-108)
--(97+61,0-108)--(97+61,-3-108)--(97+60,-3-108)--(97+60,-2-108)--(97+59,-2-108)--(97+59,-1-108)--(97+60,-1-108)--(97+60,0-108)
--(153,-108)
--(99+54,0-108)--(99+54,1-108)--(99+53,1-108)--(99+53,0-108)--(99+52,0-108)--(99+52,2-108)--(99+50,2-108)--(99+50,3-108)--(99+51,3-108)--(99+51,4-108)--(99+48,4-108)--(99+48,3-108)--(99+49,3-108)--(99+49,2-108)--(99+47,2-108)--(99+47,0-108)--(99+46,0-108)--(99+46,1-108)--(99+45,1-108)--(99+45,0-108)
--(101+43,0-108)--(101+43,1-108)--(101+42,1-108)--(101+42,0-108)--(101+41,0-108)--(101+41,2-108)--(101+39,2-108)--(101+39,3-108)--(101+40,3-108)--(101+40,4-108)--(101+37,4-108)--(101+37,3-108)--(101+38,3-108)--(101+38,2-108)--(101+36,2-108)--(101+36,0-108)--(101+35,0-108)--(101+35,1-108)--(101+34,1-108)--(101+34,0-108)
--(135,-108)--(135,-90)--(117,-90)
--(106+11,23-113)--(105+13,23-113)--(105+13,24-113)--(105+12,24-113)--(105+12,25-113)--(105+14,25-113)--(105+14,27-113)--(105+15,27-113)--(105+15,26-113)--(105+16,26-113)--(105+16,29-113)--(105+15,29-113)--(105+15,28-113)--(105+14,28-113)--(105+14,30-113)--(105+12,30-113)--(105+12,31-113)--(105+13,31-113)--(105+13,32-113)--(106+11,32-113)
--(106+11,34-115)--(106+12,34-115)--(106+12,35-115)--(106+11,35-115)--(106+11,36-115)--(106+13,36-115)--(106+13,38-115)--(106+14,38-115)--(106+14,37-115)--(106+15,37-115)--(106+15,40-115)--(106+14,40-115)--(106+14,39-115)--(106+13,39-115)--(106+13,41-115)--(106+11,41-115)--(106+11,42-115)--(106+12,42-115)--(106+12,43-115)--(106+11,43-115)
--(117,-72)
--(106+11,49-117)--(106+10,49-117)--(106+10,48-117)--(106+9,48-117)--(106+9,49-117)--(106+8,49-117)--(106+8,50-117)--(106+11,50-117)
--(117,-63)--(108,-63)--(108,-54)
--(43+61,0-54)--(43+61,-3-54)--(43+60,-3-54)--(43+60,-2-54)--(43+59,-2-54)--(43+59,-1-54)--(43+60,-1-54)--(43+60,0-54)
--(99,-54)
--(45+54,0-54)--(45+54,1-53)--(45+53,1-53)--(45+53,0-53)--(45+52,0-53)--(45+52,2-53)--(45+50,2-53)--(45+50,3-53)--(45+51,3-53)--(45+51,4-53)--(45+48,4-53)--(45+48,3-53)--(45+49,3-53)--(45+49,2-53)--(45+47,2-53)--(45+47,0-53)--(45+46,0-53)--(45+46,1-53)--(45+45,1-53)--(45+45,0-54)
--(47+43,0-54)--(47+43,1-54)--(47+42,1-54)--(47+42,0-54)--(47+41,0-54)--(47+41,2-54)--(47+39,2-54)--(47+39,3-54)--(47+40,3-54)--(47+40,4-54)--(47+37,4-54)--(47+37,3-54)--(47+38,3-54)--(47+38,2-54)--(47+36,2-54)--(47+36,0-54)--(47+35,0-54)--(47+35,1-54)--(47+34,1-54)--(47+34,0-54)
--(81,-54)--(81,-36)--(63,-36)
--(52+11,23-59)--(52+13,23-59)--(52+13,24-59)--(52+12,24-59)--(52+12,25-59)--(52+14,25-59)--(52+14,27-59)--(52+15,27-59)--(52+15,26-59)--(52+16,26-59)--(52+16,29-59)--(52+15,29-59)--(52+15,28-59)--(52+14,28-59)--(52+14,30-59)--(52+12,30-59)--(52+12,31-59)--(52+13,31-59)--(52+13,32-59)--(52+11,32-59)
--(52+11,34-61)--(53+12,34-61)--(53+12,35-61)--(53+11,35-61)--(53+11,36-61)--(53+13,36-61)--(53+13,38-61)--(53+14,38-61)--(53+14,37-61)--(53+15,37-61)--(53+15,40-61)--(53+14,40-61)--(53+14,39-61)--(53+13,39-61)--(53+13,41-61)--(53+11,41-61)--(53+11,42-61)--(53+12,42-61)--(53+12,43-61)--(52+11,43-61)
--(63,-18)
--(52+11,49-63)--(52+10,49-63)--(52+10,48-63)--(52+9,48-63)--(52+9,49-63)--(52+8,49-63)--(52+8,50-63)--(52+11,50-63)
--(63,-9)
--(54,-9)--(54,0)
--(50,0)--(50,-3)--(49,-3)--(49,-2)--(48,-2)--(48,-1)--(49,-1)--(49,0)
--(45,0)
--(54-9,0)--(54-9,1+1)--(53-9,1+1)--(53-9,0+1)--(52-9,0+1)--(52-9,2+1)--(50-9,2+1)--(50-9,3+1)--(51-9,3+1)--(51-9,4+1)--(48-9,4+1)--(48-9,3+1)--(49-9,3+1)--(49-9,2+1)--(47-9,2+1)--(47-9,0+1)--(46-9,0+1)--(46-9,1+1)--(45-9,1+1)--(45-9,0)
--(43-7,0)--(43-7,1+1)--(42-7,1+1)--(42-7,0+1)--(41-7,0+1)--(41-7,2+1)--(39-7,2+1)--(39-7,3+1)--(40-7,3+1)--(40-7,4+1)--(37-7,4+1)--(37-7,3+1)--(38-7,3+1)--(38-7,2+1)--(36-7,2+1)--(36-7,0+1)--(35-7,0+1)--(35-7,1+1)--(34-7,1+1)--(34-7,0)
--(27,0)
--(27,18)--(9,18)
--(11-2,23-5)--(13-2,23-5)--(13-2,24-5)--(12-2,24-5)--(12-2,25-5)--(14-2,25-5)--(14-2,27-5)--(15-2,27-5)--(15-2,26-5)--(16-2,26-5)--(16-2,29-5)--(15-2,29-5)--(15-2,28-5)--(14-2,28-5)--(14-2,30-5)--(12-2,30-5)--(12-2,31-5)--(13-2,31-5)--(13-2,32-5)--(11-2,32-5)
--(11-2,34-7)--(12-2,34-7)--(12-2,35-7)--(11-2,35-7)--(11-2,36-7)--(13-2,36-7)--(13-2,38-7)--(14-2,38-7)--(14-2,37-7)--(15-2,37-7)--(15-2,40-7)--(14-2,40-7)--(14-2,39-7)--(13-2,39-7)--(13-2,41-7)--(11-2,41-7)--(11-2,42-7)--(12-2,42-7)--(12-2,43-7)--(11-2,43-7)
--(9,36)
--(11-2,49-9)--(10-2,49-9)--(10-2,48-9)--(9-2,48-9)--(9-2,49-9)--(8-2,49-9)--(8-2,50-9)--(11-2,50-9)
--(9,45)--(0,45)--(0,63);

\draw [color=red!80, thick] (27,54)--(45,54);
\draw [color=red!80, thick] (27,0)--(45,0);
\draw [color=yellow!80, thick] (63,36)--(63,18);
\draw [color=green!80, thick] (9,36)--(9,18);

\draw [color=blue!80, thick] (81,0)--(99,0);
\draw [color=blue!80, thick] (81,-54)--(99,-54);
\draw [color=red!80, thick] (117,-18)--(117,-36);
\draw [color=yellow!80, thick] (63,-18)--(63,-36);

\draw [color=yellow!80, thick] (135,-54)--(153,-54);
\draw [color=yellow!80, thick] (135,-108)--(153,-108);
\draw [color=green!80, thick] (171,-72)--(171,-90);
\draw [color=red!80, thick] (117,-72)--(117,-90);

\draw [color=black!60] (0,54)--(18,54)--(18,18);
\draw [color=black!60] (9,63)--(9,45)--(45,45);
\draw [color=black!60] (9,36)--(45,36)--(45,0);
\draw [color=black!60] (9,27)--(63,27);
\draw [color=black!60] (27,54)--(27,18)--(63,18);
\draw [color=black!60] (36,54)--(36,0);
\draw [color=black!60] (27,9)--(63,9)--(63,-9)--(99,-9);
\draw [color=black!60] (54,36)--(54,0)--(72,0)--(72,-36);
\draw [color=black!60] (81,0)--(81,-36)--(117,-36);
\draw [color=black!60] (63,-18)--(99,-18)--(99,-54);
\draw [color=black!60] (63,-27)--(117,-27);
\draw [color=black!60] (90,0)--(90,-54);
\draw [color=black!60] (81,-45)--(117,-45)--(117,-63)--(153,-63);
\draw [color=black!60] (108,-18)--(108,-54)--(126,-54)--(126,-90);
\draw [color=black!60] (117,-72)--(153,-72)--(153,-108);
\draw [color=black!60] (117,-81)--(171,-81);
\draw [color=black!60] (135,-54)--(135,-90)--(171,-90);
\draw [color=black!60] (144,-54)--(144,-108);
\draw [color=black!60] (135,-99)--(162,-99)--(162,-72);

\draw [dashed, thick] (4.5,-4.5)--(54,-4.5)--(67.5,9)--(67.5,58.5)--(18,58.5)--(4.5,45)--(4.5,-4.5);

\draw [orange, thick] (18,54)--(27-5,66-12)--(27-5,69-12)--(28-5,69-12)--(28-5,68-12)--(29-5,68-12)--(29-5,67-12)--(28-5,67-12)--(28-5,66-12)--(27,54);

\foreach \x in {100}
\foreach \y in {18}
{
\draw [fill=green!80] (\x+0,0+\y)--(\x+10,10+\y)--(\x+0,20+\y)--(\x+0,0+\y);
\draw [fill=red!80] (\x+0,0+\y)--(\x+20,0+\y)--(\x+0,20+\y)--(\x+20,20+\y)--(\x+0,0+\y);
\draw [fill=yellow!80] (\x+10,10+\y)--(\x+20,20+\y)--(\x+20,0+\y)--(\x+10,10+\y);
}
\draw  [<->, line width=3,  black] (73,28)--(95,28);

\end{tikzpicture}
\end{center}
\caption{The meat.}\label{fig_meat}
\end{figure}


In general, for a set of $n$ Wang tiles with $m$ different colors ($t=\lceil \log_2 m \rceil$), we can construct the meat by adding bumps and dents in the same way to the following base polyomino
 $$d^9 \Bigl(d^9r^9(d^9)^{t+1}(r^9)^2(d^9)^2(r^9)^{t+1}\Bigr)^n u^9 r^9 \Bigl((u^9)^{t+1} (l^9)^2(u^9)^2 (l^9)^{t+1} u^9l^9\Bigr)^n l^9.$$

The \textit{jaw} polyomino (see Figure \ref{fig_jaw} and Figure \ref{fig_jaw_zoom_in}) is used to select one of the three Wang tiles simulated by the meat. Figure \ref{fig_jaw} illustrates the base polyomino of the jaw, where dents are added to the orange sides, and the areas inside the violet circles are identical (regarding the dents of the orange sides). A zoomed-in view inside a violet circle is illustrated in Figure \ref{fig_jaw_zoom_in}. The jaw is the biggest polyomino in our construction. In general, the boundary word of its base polyomino is (starting from the lower left corner)
\begin{equation*}
\begin{aligned}(r^9)^{2(n-1)(t+4)+3}u^9l^9 \Bigl ( u^9(l^9)^{t+1}(u^9)^2(l^9)^2(u^9)^{t+1}l^9\Bigr )^{n-1} u^{18}r^{18} \Bigl ( d^9(r^9)^{t+1}(d^9)^2(r^9)^2(d^9)^{t+1}r^9\Bigr )^{n-1} d^9r^9 (u^9)^{2(n-1)(t+4)+3}\\
(l^9)^{2(n-1)(t+4)+3}d^9\Bigl ( r^9d^9(r^9)^{t+1}(d^9)^2(r^9)^2(d^9)^{t+1}\Bigr )^{n-1} l^{9}d^{9} \Bigl ( (l^9)^{t+1}(u^9)^2(l^9)^2(u^9)^{t+1}l^9u^9\Bigr )^{n-1} l^9 (d^9)^{2(n-1)(t+4)+3}.
\end{aligned}
\end{equation*}
Figure \ref{fig_jaw} depicted this base polyomino for $n=3$ and $t=2$.


\begin{figure}[H]
\begin{center}
\begin{tikzpicture}[scale=0.5]

\draw [fill=gray!20] (0,0)--(27,0)
--(27,1)--(26,1)--(26,2)--(23,2)--(23,4)--(21,4)--(21,7)--(20,7)--(20,8)--(17,8)--(17,10)--(15,10)--(15,13)--(14,13)--(14,15)--(16,15)--(16,14)--(19,14)--(19,12)--(21,12)--(21,9)--(22,9)--(22,8)--(25,8)--(25,6)--(27,6)--(27,3)--(28,3)--(28,2)
--(29,2)
--(29,29)
--(2,29)
--(2,28)--(3,28)--(3,27)--(6,27)--(6,25)--(8,25)--(8,22)--(9,22)--(9,21)--(12,21)--(12,19)--(14,19)--(14,16)--(13,16)--(13,15)--(10,15)--(10,17)--(8,17)--(8,20)--(7,20)--(7,21)--(4,21)--(4,23)--(2,23)--(2,26)--(1,26)--(1,27)
--(0,27)--(0,0);

\foreach \x in {0,...,29}  
{ 
\draw [ color=black!60] (\x,0)--(\x,29);
\draw [ color=black!60] (0,\x)--(29,\x);
}

\draw[color=violet!60, very thick](5,24) circle (4);
\draw[color=violet!60, very thick](11,18) circle (4);
\draw[color=violet!60, very thick](18,11) circle (4);
\draw[color=violet!60, very thick](24,5) circle (4);
\draw[color=violet!60, very thick] (29,2.873) arc (104.48:165.52:4);
\draw[color=violet!60, very thick] (0,26.127) arc (284.48:345.52:4);
\draw[color=orange, very thick] (3,27)--(6,27);
\draw[color=orange, very thick] (4,21)--(7,21);
\draw[color=orange, very thick] (2,23)--(2,26);
\draw[color=orange, very thick] (8,22)--(8,25);
\draw[color=orange, very thick] (6+3,27-6)--(6+6,27-6);
\draw[color=orange, very thick] (6+4,21-6)--(6+7,21-6);
\draw[color=orange, very thick] (6+2,23-6)--(6+2,26-6);
\draw[color=orange, very thick] (6+8,22-6)--(6+8,25-6);
\draw[color=orange, very thick] (13+3,27-13)--(13+6,27-13);
\draw[color=orange, very thick] (13+4,21-13)--(13+7,21-13);
\draw[color=orange, very thick] (13+2,23-13)--(13+2,26-13);
\draw[color=orange, very thick] (13+8,22-13)--(13+8,25-13);
\draw[color=orange, very thick] (19+3,27-19)--(19+6,27-19);
\draw[color=orange, very thick] (19+4,21-19)--(19+7,21-19);
\draw[color=orange, very thick] (19+2,23-19)--(19+2,26-19);
\draw[color=orange, very thick] (19+8,22-19)--(19+8,25-19);

\draw[color=orange, very thick] (28,2)--(29,2);
\draw[color=orange, very thick] (27,1)--(27,0);

\draw[color=orange, very thick] (0,27)--(1,27);
\draw[color=orange, very thick] (2,28)--(2,29);

\end{tikzpicture}
\end{center}
\caption{The jaw.}\label{fig_jaw}
\end{figure}


Now we have a closer look at the dents of the orange sides of the jaw in Figure \ref{fig_jaw_zoom_in}. There are two kinds of dents here. One is the deeper dent the same as those of the meat which is used to encode the colors of Wang tiles there. The other is the $T$-shape dent which matches the $T$-shape bumps of the meat. The second kind of dent forces the meat to be put inside the jaw in one of several possible locations. 

One key feature of the jaw is that it can contain at most $2$ simulated Wang tiles of a meat (can contain at most $n-1$ simulated Wang tiles in the general case). In other words, at least one simulated Wang tile has to be left outside the jaw. On the other hand, the $T$-shape bumps of the meat make sure that at most one simulated Wang tile can be left outside the jaw. Every meat must be embraced by two jaw polyominoes from two directions, one from the northwest and the other from the southeast. But the meat has the freedom to choose which of the $3$ simulated Wang tiles to be left outside (see the overall tiling pattern illustrated later in Figure \ref{fig_pattern} for $3$ different ways by which a meat can be placed inside the jaws).

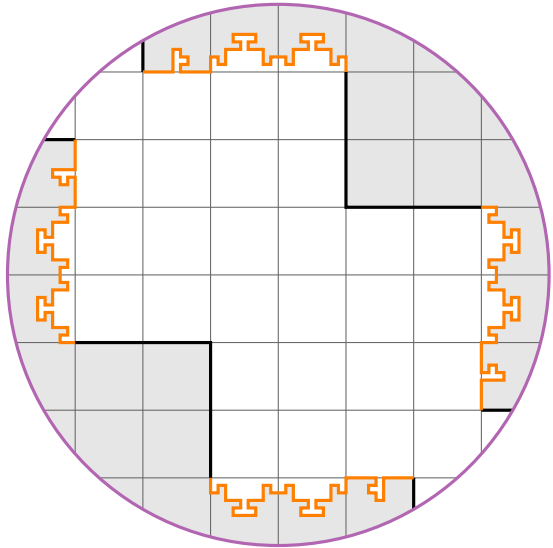
\begin{figure}[H]
\begin{center}
\begin{tikzpicture}

\begin{scope}
    \clip (0,0)--(5.4,0)--(5.4,0.9)
    --(6.1-1.1,1.1-0.2)--(6.1-1.1,0.8-0.2)--(6.0-1.1, 0.8-0.2)--(6.0-1.1,0.9-0.2)--(5.9-1.1,0.9-0.2)--(5.9-1.1,1.0-0.2)--(6.0-1.1,1.0-0.2)--(6.0-1.1,1.1-0.2)
    --(5.4-0.9,1.1-0.2)--(5.4-0.9,0.9-0.2)--(5.3-0.9,0.9-0.2)--(5.3-0.9,1.0-0.2)--(5.2-0.9,1.0-0.2)--(5.2-0.9, 0.8-0.2)--(5.0-0.9, 0.8-0.2)--(5.0-0.9, 0.7-0.2)--(5.1-0.9,0.7-0.2)--(5.1-0.9,0.6-0.2)--(4.8-0.9,0.6-0.2)--(4.8-0.9,0.7-0.2)--(4.9-0.9,0.7-0.2)--(4.9-0.9,0.8-0.2)--(4.7-0.9,0.8-0.2)--(4.7-0.9,1.0-0.2)--(4.6-0.9,1.0-0.2)--(4.6-0.9,0.9-0.2)--(4.5-0.9,0.9-0.2)--(4.5-0.9,1.1-0.2)
     --(5.4-1.8,1.1-0.2)--(5.4-1.8,0.9-0.2)--(5.3-1.8,0.9-0.2)--(5.3-1.8,1.0-0.2)--(5.2-1.8,1.0-0.2)--(5.2-1.8, 0.8-0.2)--(5.0-1.8, 0.8-0.2)--(5.0-1.8, 0.7-0.2)--(5.1-1.8,0.7-0.2)--(5.1-1.8,0.6-0.2)--(4.8-1.8,0.6-0.2)--(4.8-1.8,0.7-0.2)--(4.9-1.8,0.7-0.2)--(4.9-1.8,0.8-0.2)--(4.7-1.8,0.8-0.2)--(4.7-1.8,1.0-0.2)--(4.6-1.8, 1.0-0.2)--(4.6-1.8,0.9-0.2)--(4.5-1.8,0.9-0.2)--(4.5-1.8,1.1-0.2)
    --(2.7,0.9)--(2.7,2.7)--(0.9,2.7)
    --(1.1-0.2, 3.4-0.7)--(0.9-0.2, 3.4-0.7)-- (0.9-0.2, 3.5-0.7)--(1.0-0.2, 3.5-0.7)--(1.0-0.2, 3.6-0.7)--(0.8-0.2, 3.6-0.7)--(0.8-0.2, 3.8-0.7)--(0.7-0.2, 3.8-0.7)--(0.7-0.2, 3.7-0.7)--(0.6-0.2, 3.7-0.7)--(0.6-0.2, 4.0-0.7)--(0.7-0.2, 4.0-0.7)--(0.7-0.2, 3.9-0.7)--(0.8-0.2, 3.9-0.7)--(0.8-0.2, 4.1-0.7)--(1.0-0.2,4.1-0.7)--(1.0-0.2, 4.2-0.7)--(0.9-0.2, 4.2-0.7)--(0.9-0.2, 4.3-0.7)--(1.1-0.2, 4.3-0.7)
     --(1.1-0.2, 3.4+1.1-0.9)--(0.9-0.2, 3.4+1.1-0.9)-- (0.9-0.2, 3.5+1.1-0.9)--(1.0-0.2, 3.5+1.1-0.9)--(1.0-0.2, 3.6+1.1-0.9)--(0.8-0.2, 3.6+1.1-0.9)--(0.8-0.2, 3.8+1.1-0.9)--(0.7-0.2, 3.8+1.1-0.9)--(0.7-0.2, 3.7+1.1-0.9)--(0.6-0.2, 3.7+1.1-0.9)--(0.6-0.2, 4.0+1.1-0.9)--(0.7-0.2, 4.0+1.1-0.9)--(0.7-0.2, 3.9+1.1-0.9)--(0.8-0.2, 3.9+1.1-0.9)--(0.8-0.2, 4.1+1.1-0.9)--(1.0-0.2,4.1+1.1-0.9)--(1.0-0.2, 4.2+1.1-0.9)--(0.9-0.2, 4.2+1.1-0.9)--(0.9-0.2, 4.3+1.1-0.9)--(1.1-0.2, 4.3+1.1-0.9)
    --(1.1-0.2,6.0-1.1)--(1-0.2, 6.0-1.1)--(1-0.2, 5.9-1.1)--(0.9-0.2, 5.9-1.1)--(0.9-0.2, 6.0-1.1)--(0.8-0.2, 6.0-1.1)--(0.8-0.2, 6.1-1.1)--(1.1-0.2,6.1-1.1)
    --(0.9,5.4)--(0,5.4)--(0,0);

    \path[fill=gray!20] (3.6,3.6) circle (3.6);
\end{scope}

\begin{scope}
    \clip (1.8, 6.3)--(1.8, 7.2)--(7.2, 7.2)--(7.2, 1.8)--(6.3, 1.8)
    --(7.7-1.4, 2.7-0.5)--(8.0-1.4, 2.7-0.5)--(8.0-1.4, 2.8-0.5)--(7.9-1.4, 2.8-0.5)--(7.9-1.4, 2.9-0.5)--(7.8-1.4, 2.9-0.5)--(7.8-1.4, 2.8-0.5)--(7.7-1.4, 2.8-0.5)
    --(7.7-1.4, 3.4-0.7)--(7.9-1.4, 3.4-0.7)--(7.9-1.4, 3.5-0.7)--(7.8-1.4, 3.5-0.7)--(7.8-1.4, 3.6-0.7)--(8.0-1.4, 3.6-0.7)--(8.0-1.4, 3.8-0.7)--(8.1-1.4, 3.8-0.7)--(8.1-1.4, 3.7-0.7)--(8.2-1.4, 3.7-0.7)--(8.2-1.4, 4-0.7)--(8.1-1.4, 4-0.7)--(8.1-1.4, 3.9-0.7)--(8.0-1.4, 3.9-0.7)--(8.0-1.4, 4.1-0.7)--(7.8-1.4, 4.1-0.7)--(7.8-1.4, 4.2-0.7)--(7.9-1.4, 4.2-0.7)--(7.9-1.4, 4.3-0.7)--(7.7-1.4, 4.3-0.7)
    --(7.7-1.4, 3.4+0.2)--(7.9-1.4, 3.4+0.2)--(7.9-1.4, 3.5+0.2)--(7.8-1.4, 3.5+0.2)--(7.8-1.4, 3.6+0.2)--(8.0-1.4, 3.6+0.2)--(8.0-1.4, 3.8+0.2)--(8.1-1.4, 3.8+0.2)--(8.1-1.4, 3.7+0.2)--(8.2-1.4, 3.7+0.2)--(8.2-1.4, 4+0.2)--(8.1-1.4, 4+0.2)--(8.1-1.4, 3.9+0.2)--(8.0-1.4, 3.9+0.2)--(8.0-1.4, 4.1+0.2)--(7.8-1.4, 4.1+0.2)--(7.8-1.4, 4.2+0.2)--(7.9-1.4, 4.2+0.2)--(7.9-1.4, 4.3+0.2)--(7.7-1.4, 4.3+0.2)
    --(6.3, 4.5)--(4.5,4.5)--(4.5,6.3)
    --(5.4-0.9, 7.7-1.4)--(5.4-0.9, 7.9-1.4)--(5.3-0.9, 7.9-1.4)--(5.3-0.9, 7.8-1.4)--(5.2-0.9, 7.8-1.4)--(5.2-0.9, 8.0-1.4)--(5-0.9, 8.0-1.4)--(5-0.9, 8.1-1.4)--(5.1-0.9, 8.1-1.4)--(5.1-0.9, 8.2-1.4)--(4.8-0.9, 8.2-1.4)--(4.8-0.9, 8.1-1.4)--(4.9-0.9, 8.1-1.4)--(4.9-0.9, 8-1.4)--(4.7-0.9, 8-1.4)--(4.7-0.9, 7.8-1.4)--(4.6-0.9, 7.8-1.4)--(4.6-0.9, 7.9-1.4)--(4.5-0.9, 7.9-1.4)--(4.5-0.9, 7.7-1.4)
    --(5.4-1.1-0.7, 7.7-1.4)--(5.4-1.1-0.7, 7.9-1.4)--(5.3-1.1-0.7, 7.9-1.4)--(5.3-1.1-0.7, 7.8-1.4)--(5.2-1.1-0.7, 7.8-1.4)--(5.2-1.1-0.7, 8.0-1.4)--(5-1.1-0.7, 8.0-1.4)--(5-1.1-0.7, 8.1-1.4)--(5.1-1.1-0.7, 8.1-1.4)--(5.1-1.1-0.7, 8.2-1.4)--(4.8-1.1-0.7, 8.2-1.4)--(4.8-1.1-0.7, 8.1-1.4)--(4.9-1.1-0.7, 8.1-1.4)--(4.9-1.1-0.7, 8-1.4)--(4.7-1.1-0.7, 8-1.4)--(4.7-1.1-0.7, 7.8-1.4)--(4.6-1.1-0.7, 7.8-1.4)--(4.6-1.1-0.7, 7.9-1.4)--(4.5-1.1-0.7, 7.9-1.4)--(4.5-1.1-0.7, 7.7-1.4)
    --(2.8-0.5, 7.7-1.4)--(2.8-0.5,7.8-1.4)--(2.9-0.5,7.8-1.4)--(2.9-0.5,7.9-1.4)--(2.8-0.5,7.9-1.4)--(2.8-0.5,8.0-1.4)--(2.7-0.5,8.0-1.4)--(2.7-0.5,7.7-1.4)
    --(1.8, 6.3);
    
    \path[fill=gray!20] (3.6,3.6) circle (3.6);
\end{scope}

\begin{scope}
    \clip (3.6,3.6) circle (3.6);
\foreach \x in {1,...,7}  
{ 
\draw [ color=black!60] (0.9*\x,0)--(0.9*\x,7.2);
\draw [ color=black!60] (0,0.9*\x)--(7.2,0.9*\x);
}
\draw [very thick](5.4,0)--(5.4,0.9);\draw  [very thick](0,5.4)--(0.9,5.4); \draw  [very thick](2.7,0.9)--(2.7,2.7)--(0.9,2.7);
\draw  [very thick] (6.3, 4.5)--(4.5,4.5)--(4.5,6.3); \draw  [very thick] (7.2, 1.8)--(6.3, 1.8); \draw  [very thick] (1.8, 6.3)--(1.8, 7.2);
\end{scope}


\draw [color=orange, very thick] (5.4,0.9)--(6.1-1.1,1.1-0.2)--(6.1-1.1,0.8-0.2)--(6.0-1.1, 0.8-0.2)--(6.0-1.1,0.9-0.2)--(5.9-1.1,0.9-0.2)--(5.9-1.1,1.0-0.2)--(6.0-1.1,1.0-0.2)--(6.0-1.1,1.1-0.2)
    --(5.4-0.9,1.1-0.2)--(5.4-0.9,0.9-0.2)--(5.3-0.9,0.9-0.2)--(5.3-0.9,1.0-0.2)--(5.2-0.9,1.0-0.2)--(5.2-0.9, 0.8-0.2)--(5.0-0.9, 0.8-0.2)--(5.0-0.9, 0.7-0.2)--(5.1-0.9,0.7-0.2)--(5.1-0.9,0.6-0.2)--(4.8-0.9,0.6-0.2)--(4.8-0.9,0.7-0.2)--(4.9-0.9,0.7-0.2)--(4.9-0.9,0.8-0.2)--(4.7-0.9,0.8-0.2)--(4.7-0.9,1.0-0.2)--(4.6-0.9,1.0-0.2)--(4.6-0.9,0.9-0.2)--(4.5-0.9,0.9-0.2)
     --(5.4-1.8,0.9-0.2)--(5.3-1.8,0.9-0.2)--(5.3-1.8,1.0-0.2)--(5.2-1.8,1.0-0.2)--(5.2-1.8, 0.8-0.2)--(5.0-1.8, 0.8-0.2)--(5.0-1.8, 0.7-0.2)--(5.1-1.8,0.7-0.2)--(5.1-1.8,0.6-0.2)--(4.8-1.8,0.6-0.2)--(4.8-1.8,0.7-0.2)--(4.9-1.8,0.7-0.2)--(4.9-1.8,0.8-0.2)--(4.7-1.8,0.8-0.2)--(4.7-1.8,1.0-0.2)--(4.6-1.8, 1.0-0.2)--(4.6-1.8,0.9-0.2)--(4.5-1.8,0.9-0.2)--(4.5-1.8,1.1-0.2)
    --(2.7,0.9);

\draw [color=orange, very thick]    (0.9,2.7)
    --(1.1-0.2, 3.4-0.7)--(0.9-0.2, 3.4-0.7)-- (0.9-0.2, 3.5-0.7)--(1.0-0.2, 3.5-0.7)--(1.0-0.2, 3.6-0.7)--(0.8-0.2, 3.6-0.7)--(0.8-0.2, 3.8-0.7)--(0.7-0.2, 3.8-0.7)--(0.7-0.2, 3.7-0.7)--(0.6-0.2, 3.7-0.7)--(0.6-0.2, 4.0-0.7)--(0.7-0.2, 4.0-0.7)--(0.7-0.2, 3.9-0.7)--(0.8-0.2, 3.9-0.7)--(0.8-0.2, 4.1-0.7)--(1.0-0.2,4.1-0.7)--(1.0-0.2, 4.2-0.7)--(0.9-0.2, 4.2-0.7)--(0.9-0.2, 4.3-0.7)
     --(0.9-0.2, 3.4+1.1-0.9)-- (0.9-0.2, 3.5+1.1-0.9)--(1.0-0.2, 3.5+1.1-0.9)--(1.0-0.2, 3.6+1.1-0.9)--(0.8-0.2, 3.6+1.1-0.9)--(0.8-0.2, 3.8+1.1-0.9)--(0.7-0.2, 3.8+1.1-0.9)--(0.7-0.2, 3.7+1.1-0.9)--(0.6-0.2, 3.7+1.1-0.9)--(0.6-0.2, 4.0+1.1-0.9)--(0.7-0.2, 4.0+1.1-0.9)--(0.7-0.2, 3.9+1.1-0.9)--(0.8-0.2, 3.9+1.1-0.9)--(0.8-0.2, 4.1+1.1-0.9)--(1.0-0.2,4.1+1.1-0.9)--(1.0-0.2, 4.2+1.1-0.9)--(0.9-0.2, 4.2+1.1-0.9)--(0.9-0.2, 4.3+1.1-0.9)--(1.1-0.2, 4.3+1.1-0.9)
    --(1.1-0.2,6.0-1.1)--(1-0.2, 6.0-1.1)--(1-0.2, 5.9-1.1)--(0.9-0.2, 5.9-1.1)--(0.9-0.2, 6.0-1.1)--(0.8-0.2, 6.0-1.1)--(0.8-0.2, 6.1-1.1)--(1.1-0.2,6.1-1.1)
    --(0.9,5.4);

\draw [color=orange, very thick]    (6.3, 1.8)
    --(7.7-1.4, 2.7-0.5)--(8.0-1.4, 2.7-0.5)--(8.0-1.4, 2.8-0.5)--(7.9-1.4, 2.8-0.5)--(7.9-1.4, 2.9-0.5)--(7.8-1.4, 2.9-0.5)--(7.8-1.4, 2.8-0.5)--(7.7-1.4, 2.8-0.5)
    --(7.7-1.4, 3.4-0.7)--(7.9-1.4, 3.4-0.7)--(7.9-1.4, 3.5-0.7)--(7.8-1.4, 3.5-0.7)--(7.8-1.4, 3.6-0.7)--(8.0-1.4, 3.6-0.7)--(8.0-1.4, 3.8-0.7)--(8.1-1.4, 3.8-0.7)--(8.1-1.4, 3.7-0.7)--(8.2-1.4, 3.7-0.7)--(8.2-1.4, 4-0.7)--(8.1-1.4, 4-0.7)--(8.1-1.4, 3.9-0.7)--(8.0-1.4, 3.9-0.7)--(8.0-1.4, 4.1-0.7)--(7.8-1.4, 4.1-0.7)--(7.8-1.4, 4.2-0.7)--(7.9-1.4, 4.2-0.7)--(7.9-1.4, 4.3-0.7)
    --(7.9-1.4, 3.4+0.2)--(7.9-1.4, 3.5+0.2)--(7.8-1.4, 3.5+0.2)--(7.8-1.4, 3.6+0.2)--(8.0-1.4, 3.6+0.2)--(8.0-1.4, 3.8+0.2)--(8.1-1.4, 3.8+0.2)--(8.1-1.4, 3.7+0.2)--(8.2-1.4, 3.7+0.2)--(8.2-1.4, 4+0.2)--(8.1-1.4, 4+0.2)--(8.1-1.4, 3.9+0.2)--(8.0-1.4, 3.9+0.2)--(8.0-1.4, 4.1+0.2)--(7.8-1.4, 4.1+0.2)--(7.8-1.4, 4.2+0.2)--(7.9-1.4, 4.2+0.2)--(7.9-1.4, 4.3+0.2)--(7.7-1.4, 4.3+0.2)
    --(6.3, 4.5);

\draw [color=orange, very thick]    (4.5,6.3)
    --(5.4-0.9, 7.7-1.4)--(5.4-0.9, 7.9-1.4)--(5.3-0.9, 7.9-1.4)--(5.3-0.9, 7.8-1.4)--(5.2-0.9, 7.8-1.4)--(5.2-0.9, 8.0-1.4)--(5-0.9, 8.0-1.4)--(5-0.9, 8.1-1.4)--(5.1-0.9, 8.1-1.4)--(5.1-0.9, 8.2-1.4)--(4.8-0.9, 8.2-1.4)--(4.8-0.9, 8.1-1.4)--(4.9-0.9, 8.1-1.4)--(4.9-0.9, 8-1.4)--(4.7-0.9, 8-1.4)--(4.7-0.9, 7.8-1.4)--(4.6-0.9, 7.8-1.4)--(4.6-0.9, 7.9-1.4)--(4.5-0.9, 7.9-1.4)
    --(5.4-1.1-0.7, 7.9-1.4)--(5.3-1.1-0.7, 7.9-1.4)--(5.3-1.1-0.7, 7.8-1.4)--(5.2-1.1-0.7, 7.8-1.4)--(5.2-1.1-0.7, 8.0-1.4)--(5-1.1-0.7, 8.0-1.4)--(5-1.1-0.7, 8.1-1.4)--(5.1-1.1-0.7, 8.1-1.4)--(5.1-1.1-0.7, 8.2-1.4)--(4.8-1.1-0.7, 8.2-1.4)--(4.8-1.1-0.7, 8.1-1.4)--(4.9-1.1-0.7, 8.1-1.4)--(4.9-1.1-0.7, 8-1.4)--(4.7-1.1-0.7, 8-1.4)--(4.7-1.1-0.7, 7.8-1.4)--(4.6-1.1-0.7, 7.8-1.4)--(4.6-1.1-0.7, 7.9-1.4)--(4.5-1.1-0.7, 7.9-1.4)--(4.5-1.1-0.7, 7.7-1.4)
    --(2.8-0.5, 7.7-1.4)--(2.8-0.5,7.8-1.4)--(2.9-0.5,7.8-1.4)--(2.9-0.5,7.9-1.4)--(2.8-0.5,7.9-1.4)--(2.8-0.5,8.0-1.4)--(2.7-0.5,8.0-1.4)--(2.7-0.5,7.7-1.4)
    --(1.8, 6.3);

\draw[color=violet!60, very thick] (3.6,3.6) circle (3.6);

\end{tikzpicture}
\end{center}
\caption{Part of the jaw (zoom in).}\label{fig_jaw_zoom_in}
\end{figure}


\begin{figure}[H]
\begin{center}
\begin{tikzpicture}[scale=0.1]

\draw [fill=gray!20] (0,63)--(18,63)
--(18,54)--(27-5,66-12)--(27-5,69-12)--(28-5,69-12)--(28-5,68-12)--(29-5,68-12)--(29-5,67-12)--(28-5,67-12)--(28-5,66-12)--(27,54)
--(34-7,66-12)--(34-7,68-12)--(35-7,68-12)--(35-7,67-12)--(36-7,67-12)--(36-7,69-12)--(38-7,69-12)--(38-7,70-12)--(37-7,70-12)--(37-7,71-12)--(40-7,71-12)--(40-7,70-12)--(39-7,70-12)--(39-7,69-12)--(41-7,69-12)--(41-7,67-12)--(42-7,67-12)--(42-7,68-12)--(43-7,68-12)
--(45-9,68-12)--(46-9,68-12)--(46-9,67-12)--(47-9,67-12)--(47-9,69-12)--(49-9,69-12)--(49-9,70-12)--(48-9,70-12)--(48-9,71-12)--(51-9,71-12)--(51-9,70-12)--(50-9,70-12)--(50-9,69-12)--(52-9,69-12)--(52-9,67-12)--(53-9,67-12)--(53-9,68-12)--(54-9,68-12)--(54-9,66-12)
--(45,54)--(45,36)--(63,36)
--(77-14,43-7)--(79-14,43-7)--(79-14,42-7)--(78-14,42-7)--(78-14,41-7)--(80-14,41-7)--(80-14,39-7)--(81-14,39-7)--(81-14,40-7)--(82-14,40-7)--(82-14,37-7)--(81-14,37-7)--(81-14,38-7)--(80-14,38-7)--(80-14,36-7)--(78-14,36-7)--(78-14,35-7)--(79-14,35-7)--(79-14,34-7)
--(79-14,32-5)--(79-14,31-5)--(78-14,31-5)--(78-14,30-5)--(80-14,30-5)--(80-14,28-5)--(81-14,28-5)--(81-14,29-5)--(82-14,29-5)--(82-14,26-5)--(81-14,26-5)--(81-14,27-5)--(80-14,27-5)--(80-14,25-5)--(78-14,25-5)--(78-14,24-5)--(79-14,24-5)--(79-14,23-5)--(77-14,23-5)
--(63,18)
--(77-14,17-3)--(78-14,17-3)--(78-14,18-3)--(79-14,18-3)--(79-14,17-3)--(80-14,17-3)--(80-14,16-3)--(77-14,16-3)
--(63,9)--(54,9)--(54,0)
--(61-11,0)--(61-11,-3)--(60-11,-3)--(60-11,-2)--(59-11,-2)--(59-11,-1)--(60-11,-1)--(60-11,0)
--(45,0)
--(54-9,0)--(54-9,-2)--(53-9,-2)--(53-9,-1)--(52-9,-1)--(52-9,-3)--(50-9,-3)--(50-9,-4)--(51-9,-4)--(51-9,-5)--(48-9,-5)--(48-9,-4)--(49-9,-4)--(49-9,-3)--(47-9,-3)--(47-9,-1)--(46-9,-1)--(46-9,-2)--(45-9,-2)
--(43-7,-2)--(42-7,-2)--(42-7,-1)--(41-7,-1)--(41-7,-3)--(39-7,-3)--(39-7,-4)--(40-7,-4)--(40-7,-5)--(37-7,-5)--(37-7,-4)--(38-7,-4)--(38-7,-3)--(36-7,-3)--(36-7,-1)--(35-7,-1)--(35-7,-2)--(34-7,-2)--(34-7,0)
--(27,0)
--(27,18)--(9,18)
--(11-2,23-5)--(9-2,23-5)--(9-2,24-5)--(10-2,24-5)--(10-2,25-5)--(8-2,25-5)--(8-2,27-5)--(7-2,27-5)--(7-2,26-5)--(6-2,26-5)--(6-2,29-5)--(7-2,29-5)--(7-2,28-5)--(8-2,28-5)--(8-2,30-5)--(10-2,30-5)--(10-2,31-5)--(9-2,31-5)--(9-2,32-5)
--(9-2,34-7)--(9-2,35-7)--(10-2,35-7)--(10-2,36-7)--(8-2,36-7)--(8-2,38-7)--(7-2,38-7)--(7-2,37-7)--(6-2,37-7)--(6-2,40-7)--(7-2,40-7)--(7-2,39-7)--(8-2,39-7)--(8-2,41-7)--(10-2,41-7)--(10-2,42-7)--(9-2,42-7)--(9-2,43-7)--(11-2,43-7)
--(9,36)
--(11-2,49-9)--(10-2,49-9)--(10-2,48-9)--(9-2,48-9)--(9-2,49-9)--(8-2,49-9)--(8-2,50-9)--(11-2,50-9)
--(9,45)--(0,45)--(0,63);

\draw [color=black!60] (0,54)--(18,54)--(18,18);
\draw [color=black!60] (9,63)--(9,45)--(45,45);
\draw [color=black!60] (9,36)--(45,36)--(45,0);
\draw [color=black!60] (7,27)--(65,27);
\draw [color=black!60] (27,54)--(27,18)--(63,18);
\draw [color=black!60] (36,56)--(36,-2);
\draw [color=black!60] (27,9)--(54,9)--(54,36);
\draw [color=black!60] (9,18)--(9,36);
\draw [color=black!60] (63,18)--(63,36);
\draw [color=black!60] (27,0)--(45,0);
\draw [color=black!60] (27,54)--(45,54);

\end{tikzpicture}
\end{center}
\caption{A filler.}\label{fig_filler}
\end{figure}
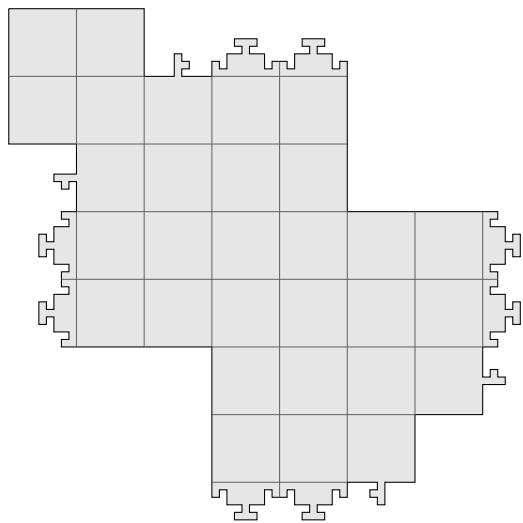

After the meats have been placed inside the jaws, there are still several holes inside the jaws. These holes are filled by the \textit{fillers} (Figure \ref{fig_filler}) and the \textit{teeth} (Figure \ref{fig_teeth}). The bigger holes are fill by the fillers. Therefore, the shape of the base polyomino and the shape of the bumps of the filler match exactly the shape of a portion inside the jaw and the shape of the dents there. See the tiling pattern in Figure \ref{fig_pattern} for how the fillers fill the holes. In general, the base boundary word for the filler is (starting from the upper left corner)
$$d^{18} r^9 (d^9)^{t+1} r^{18}r^{18} (r^9)^{t+1} u^9 r^9 (u^9)^{t+1} l^{18} u^{18} (l^9)^{t+1} u^9 l^{18}.$$

Besides the bigger holes, there are still small gaps between the deeper dents of the jaw and the normal or deeper dents of the meat. We need $3$ kinds of teeth (see Figure \ref{fig_teeth}) to fill the gaps. Note that in Figure \ref{fig_teeth} and Figure \ref{fig_links}, each small gray square represents a $1\times 1$ square, rather than a $9\times 9$ square in the figures of the meat, the jaw and the filler. The first tooth can fill the gap between a normal dent and a deeper dent, in either a vertical or horizontal direction. The second tooth can fill the gap between two vertical deeper dents, and the third tooth can fill the gap between two horizontal deeper dents. So the purpose of the deeper dents we add to the jaw is to decrease the number of teeth needed to fill the gaps. We would have needed $4$ different teeth if those deeper dents were not presented in the jaw, or the dents were in other shapes. The teeth are the smallest ones in our set of $8$ polyominoes. Their sizes and shapes are fixed and does not change with $n$ or $m$ (the number of Wang tiles or colors).


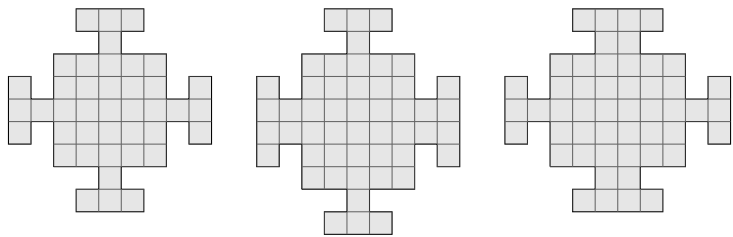
\begin{figure}[H]
\begin{center}
\begin{tikzpicture}[scale=0.3]

\draw [fill=gray!20] (0,0)--(2,0)
--(2,-1)--(1,-1)--(1,-2)--(4,-2)--(4,-1)--(3,-1)
--(3,0)--(5,0)
--(5,2)--(6,2)--(6,1)--(7,1)--(7,4)--(6,4)--(6,3)--(5,3)
--(5,5)
--(3,5)--(3,6)--(4,6)--(4,7)--(1,7)--(1,6)--(2,6)--(2,5)
--(0,5)
--(0,3)--(-1,3)--(-1,4)--(-2,4)--(-2,1)--(-1,1)--(-1,2)--(0,2)
--(0,0);

\draw [fill=gray!20] (11+0,-1)--(11+2,-1)
--(11+2,-2)--(11+1,-2)--(11+1,-3)--(11+4,-3)--(11+4,-2)--(11+3,-2)
--(11+3,-1)--(11+5,-1)
--(11+5,1)--(11+6,1)--(11+6,0)--(11+7,0)--(11+7,4)--(11+6,4)--(11+6,3)--(11+5,3)
--(11+5,5)
--(11+3,5)--(11+3,6)--(11+4,6)--(11+4,7)--(11+1,7)--(11+1,6)--(11+2,6)--(11+2,5)
--(11+0,5)
--(11+0,3)--(11+-1,3)--(11+-1,4)--(11+-2,4)--(11+-2,0)--(11+-1,0)--(11+-1,1)--(11+0,1)
--(11+0,-1);

\draw [fill=gray!20] (22+0,0)--(22+2,0)
--(22+2,-1)--(22+1,-1)--(22+1,-2)--(23+4,-2)--(23+4,-1)--(23+3,-1)
--(23+3,0)--(23+5,0)
--(23+5,2)--(23+6,2)--(23+6,1)--(23+7,1)--(23+7,4)--(23+6,4)--(23+6,3)--(23+5,3)
--(23+5,5)
--(23+3,5)--(23+3,6)--(23+4,6)--(23+4,7)--(22+1,7)--(22+1,6)--(22+2,6)--(22+2,5)
--(22+0,5)
--(22+0,3)--(22+-1,3)--(22+-1,4)--(22+-2,4)--(22+-2,1)--(22+-1,1)--(22+-1,2)--(22+0,2)
--(22+0,0);

\begin{scope}
    \clip (0,0)--(2,0)
--(2,-1)--(1,-1)--(1,-2)--(4,-2)--(4,-1)--(3,-1)
--(3,0)--(5,0)
--(5,2)--(6,2)--(6,1)--(7,1)--(7,4)--(6,4)--(6,3)--(5,3)
--(5,5)
--(3,5)--(3,6)--(4,6)--(4,7)--(1,7)--(1,6)--(2,6)--(2,5)
--(0,5)
--(0,3)--(-1,3)--(-1,4)--(-2,4)--(-2,1)--(-1,1)--(-1,2)--(0,2)
--(0,0);

\foreach \x in {-1,...,6}  
{ 
\draw [ color=black!60] (\x,-2)--(\x,7);
\draw [ color=black!60] (-2,\x)--(7,\x);
}
\end{scope}

\begin{scope}
    \clip (11+0,-1)--(11+2,-1)
--(11+2,-2)--(11+1,-2)--(11+1,-3)--(11+4,-3)--(11+4,-2)--(11+3,-2)
--(11+3,-1)--(11+5,-1)
--(11+5,1)--(11+6,1)--(11+6,0)--(11+7,0)--(11+7,4)--(11+6,4)--(11+6,3)--(11+5,3)
--(11+5,5)
--(11+3,5)--(11+3,6)--(11+4,6)--(11+4,7)--(11+1,7)--(11+1,6)--(11+2,6)--(11+2,5)
--(11+0,5)
--(11+0,3)--(11+-1,3)--(11+-1,4)--(11+-2,4)--(11+-2,0)--(11+-1,0)--(11+-1,1)--(11+0,1)
--(11+0,-1);

\foreach \x in {-2,...,6}  
{ 
\draw [ color=black!60] (11+\x,-3)--(11+\x,7);
\draw [ color=black!60] (9,\x)--(18,\x);
}
\end{scope}

\begin{scope}
    \clip (22+0,0)--(22+2,0)
--(22+2,-1)--(22+1,-1)--(22+1,-2)--(23+4,-2)--(23+4,-1)--(23+3,-1)
--(23+3,0)--(23+5,0)
--(23+5,2)--(23+6,2)--(23+6,1)--(23+7,1)--(23+7,4)--(23+6,4)--(23+6,3)--(23+5,3)
--(23+5,5)
--(23+3,5)--(23+3,6)--(23+4,6)--(23+4,7)--(22+1,7)--(22+1,6)--(22+2,6)--(22+2,5)
--(22+0,5)
--(22+0,3)--(22+-1,3)--(22+-1,4)--(22+-2,4)--(22+-2,1)--(22+-1,1)--(22+-1,2)--(22+0,2)
--(22+0,0);

\foreach \x in {-1,...,7}  
{ 
\draw [ color=black!60] (22+\x,-2)--(22+\x,8);
\draw [ color=black!60] (20,\x)--(30,\x);
}
\end{scope}

\end{tikzpicture}
\end{center}
\caption{The teeth.}\label{fig_teeth}
\end{figure}

The shape of normal and deeper dents to encode the colors of Wang tiles also decreases the number of link polyominoes needed to connect the meats outside the jaws. Now we just need $2$ \textit{links}, a horizontal link and a vertical link (see Figure \ref{fig_links}), compared to $4$ links in Ollinger's proof for the undecidability of $11$-polyomino tiling problem. The base polyomino the the horizontal link is a rectangle of size $\Bigl (18(n-1)(t+4)+9 \Bigr )\times 9$, as illustrated by the red rectangle in Figure \ref{fig_links}. In other words, the boundary word of the base polyomino of the horizontal link is $d^9(r^9)^{2(n-1)(t+4)+1}u^9(l^9)^{2(n-1)(t+4)+1}$. Two bumps that match the shapes of the normal dent and the deeper dent are added to the left side and the right side of the base polyomino, respectively, to form the horizontal link. We can rotate the horizontal link by $\pi/2$ to get the vertical link.


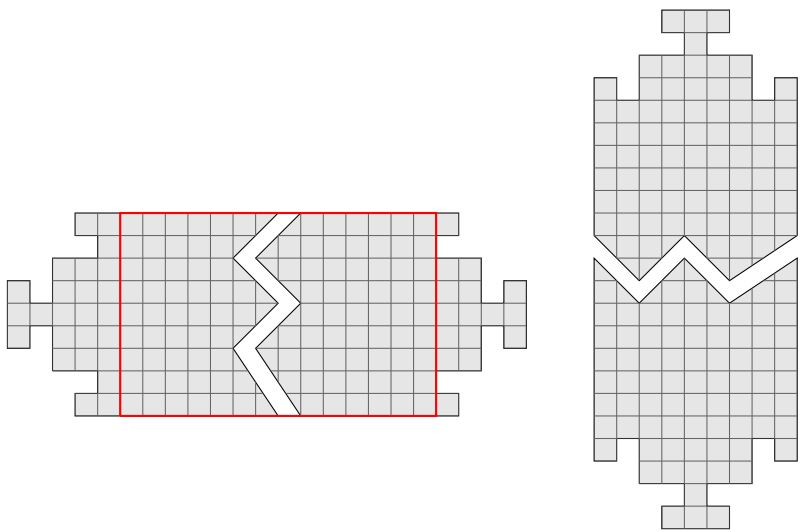
\begin{figure}[H]
\begin{center}
\begin{tikzpicture}[scale=0.3]

\draw [fill=gray!20] (0,0)--(2,0)--(2,-1)--(1,-1)--(1,-2)--(10,-2)
--(8,1)--(10,3)--(8,5)
--(10,7)--(1,7)--(1,6)--(2,6)--(2,5)--(0,5)
--(0,3)--(-1,3)--(-1,4)--(-2,4)--(-2,1)--(-1,1)--(-1,2)--(0,2)
--(0,0);
\draw [fill=gray!20] (11,-2)--(18,-2)--(18,-1)--(17,-1)--(17,0)--(19,0)
--(19,2)--(20,2)--(20,1)--(21,1)--(21,4)--(20,4)--(20,3)--(19,3)
--(19,5)--(17,5)--(17,6)--(18,6)--(18,7)--(11,7)
--(9,5)--(11,3)--(9,1)
--(11,-2);

\draw [fill=gray!20] (26,0-5)--(28,0-5)--(28,-1-5)--(27,-1-5)--(27,-2-5)--(30,-2-5)--(30,-1-5)--(29,-1-5)--(29,0-5)--(31,0-5)
--(31,2-5)--(32,2-5)--(32,1-5)--(33,1-5)--(33,10-5)
--(30,3)--(28,5)--(26,3)
--(24,10-5)--(24,1-5)--(25,1-5)--(25,2-5)--(26,2-5)--(26,0-5);

\draw [fill=gray!20] (33,6)--(33,13)--(32,13)--(32,12)--(31,12)
--(31,14)
--(29,14)--(29,15)--(30,15)--(30,16)--(27,16)--(27,15)--(28,15)--(28,14)
--(26,14)--(26,12)--(25,12)--(25,13)--(24,13)--(24,6)
--(26,4)--(28,6)--(30,4)--(33,6);

\begin{scope}
    \clip (0,0)--(2,0)--(2,-1)--(1,-1)--(1,-2)--(10,-2)
--(8,1)--(10,3)--(8,5)
--(10,7)--(1,7)--(1,6)--(2,6)--(2,5)--(0,5)
--(0,3)--(-1,3)--(-1,4)--(-2,4)--(-2,1)--(-1,1)--(-1,2)--(0,2)
--(0,0);

\foreach \x in {-1,...,6}  
{ 
\draw [ color=black!60] (-2,\x)--(21,\x);
}
\foreach \x in {-2,...,10}  
{ 
\draw [ color=black!60] (\x, -2)--(\x, 7);
}
\end{scope}

\begin{scope}
    \clip (11,-2)--(18,-2)--(18,-1)--(17,-1)--(17,0)--(19,0)
--(19,2)--(20,2)--(20,1)--(21,1)--(21,4)--(20,4)--(20,3)--(19,3)
--(19,5)--(17,5)--(17,6)--(18,6)--(18,7)--(11,7)
--(9,5)--(11,3)--(9,1)
--(11,-2);

\foreach \x in {-1,...,6}  
{ 
\draw [ color=black!60] (-2,\x)--(21,\x);
}
\foreach \x in {10,...,21}  
{ 
\draw [ color=black!60] (\x, -2)--(\x, 7);
}
\end{scope}

\begin{scope}
    \clip (26,0-5)--(28,0-5)--(28,-1-5)--(27,-1-5)--(27,-2-5)--(30,-2-5)--(30,-1-5)--(29,-1-5)--(29,0-5)--(31,0-5)
--(31,2-5)--(32,2-5)--(32,1-5)--(33,1-5)--(33,10-5)
--(30,3)--(28,5)--(26,3)
--(24,10-5)--(24,1-5)--(25,1-5)--(25,2-5)--(26,2-5)--(26,0-5);

\foreach \x in {-7,...,5}  
{ 
\draw [ color=black!60] (24,\x)--(33,\x);
}
\foreach \x in {25,...,32}  
{ 
\draw [ color=black!60] (\x, -7)--(\x, 5);
}
\end{scope}

\begin{scope}
    \clip (33,6)--(33,13)--(32,13)--(32,12)--(31,12)
--(31,14)
--(29,14)--(29,15)--(30,15)--(30,16)--(27,16)--(27,15)--(28,15)--(28,14)
--(26,14)--(26,12)--(25,12)--(25,13)--(24,13)--(24,6)
--(26,4)--(28,6)--(30,4)--(33,6);

\foreach \x in {5,...,15}  
{ 
\draw [ color=black!60] (24,\x)--(33,\x);
}
\foreach \x in {25,...,32}  
{ 
\draw [ color=black!60] (\x, 4)--(\x, 16);
}
\end{scope}

\draw [color=red,thick] (3,-2)--(17,-2)--(17,7)--(3,7)--(3,-2);

\end{tikzpicture}
\end{center}
\caption{The links.}\label{fig_links}
\end{figure}

We have finished introducing all the $8$ polyominoes in our reduction. To complete the proof, we show that if this set of $8$ polyominoes can tile the plane, its tiling must adopt the pattern illustrated in Figure \ref{fig_pattern}. In Figure \ref{fig_pattern}, the gray tiles are jaws, the orange tiles are meats, the green tiles are fillers, and the light violet tiles are links. All the small bumps and dents are omitted in Figure \ref{fig_pattern}, so the teeth cannot be seen at all. Three of the meats are depicted in full, together with the fillers which fill the holes left by these three meats inside the jaws. For all the other meats, only the parts outside the jaws are depicted. So most parts inside the jaws (the fillers and parts of the meats) are omitted in Figure \ref{fig_pattern}, too. Because in this tiling pattern, the inside parts of the jaw can always be filled up. The simulated Wang tiles which are placed outside the jaws for every meat determine whether this pattern can be extended infinitely to tile the entire plane.


\begin{figure}[H]
\begin{center}
\begin{tikzpicture}[scale=0.1]

\foreach \x in {0,...,3}
\foreach \y in {0,...,3} 
{ 
\draw [ fill=gray!20] (31*\x+0,0+31*\y)--(31*\x+0,29+31*\y)--(31*\x+29,29+31*\y)--(31*\x+29,0+31*\y)--(31*\x+0,0+31*\y);
}

\foreach \x in {-1,...,3}
\foreach \y in {-1,...,3} 
{ 
\draw [fill=orange] (31*\x+29,29+31*\y)--(31*\x+29,27+31*\y)--(31*\x+31,27+31*\y)--(31*\x+31,29+31*\y)--(31*\x+33,29+31*\y)--(31*\x+33,31+31*\y)--(31*\x+31,31+31*\y)--(31*\x+31,33+31*\y)--(31*\x+29,33+31*\y)--(31*\x+29,31+31*\y)--(31*\x+27,31+31*\y)--(31*\x+27,29+31*\y)--(31*\x+29,29+31*\y);
}

\foreach \x in {0,31,62,93}
\foreach \y in {0,1,31,32,62,63} 
{ 
\draw [fill=violet!20] (\x+2,29+\y)--(\x+27,29+\y)--(\x+27,30+\y)--(\x+2,30+\y)--(\x+2,29+\y);
}

\foreach \y in {0,31,62,93}
\foreach \x in {0,1,31,32,62,63} 
{ 
\draw [fill=violet!20] (\x+29,2+\y)--(\x+29,27+\y)--(\x+30,27+\y)--(\x+30,2+\y)--(\x+29,2+\y);
}


\draw [fill=orange] (60,60)--(60,58)--(63,58)--(63,57)--(64,57)--(64,54)--(66,54)--(66,52)--(69,52)--(69,53)--(70,53)--(70,56)--(68,56)--(68,58)--(65,58)--(65,59)--(64,59)--(64,62)
--(62,62)--(62,64)--(59,64)--(59,65)--(58,65)--(58,68)--(56,68)--(56,70)--(53,70)--(53,71)--(51,71)--(51,69)--(52,69)--(52,66)--(54,66)--(54,64)--(57,64)--(57,63)--(58,63)--(58,60)--(60,60);

\foreach \x in {37}
\foreach \y in {-37} 
{ 
\draw [fill=orange] (\x+60,60+\y)--(\x+60,58+\y)--(\x+63,58+\y)--(\x+63,57+\y)--(\x+64,57+\y)--(\x+64,54+\y)--(\x+66,54+\y)--(\x+66,52+\y)--(\x+69,52+\y)--(\x+69,53+\y)--(\x+70,53+\y)--(\x+70,56+\y)--(\x+68,56+\y)--(\x+68,58+\y)--(\x+65,58+\y)--(\x+65,59+\y)--(\x+64,59+\y)--(\x+64,62+\y)
--(\x+62,62+\y)--(\x+62,64+\y)--(\x+59,64+\y)--(\x+59,65+\y)--(\x+58,65+\y)--(\x+58,68+\y)--(\x+56,68+\y)--(\x+56,70+\y)--(\x+53,70+\y)--(\x+53,71+\y)--(\x+51,71+\y)--(\x+51,69+\y)--(\x+52,69+\y)--(\x+52,66+\y)--(\x+54,66+\y)--(\x+54,64+\y)--(\x+57,64+\y)--(\x+57,63+\y)--(\x+58,63+\y)--(\x+58,60+\y)--(\x+60,60+\y);
}

\foreach \x in {25}
\foreach \y in {6} 
{ 
\draw [fill=orange] (\x+60,60+\y)--(\x+60,58+\y)--(\x+63,58+\y)--(\x+63,57+\y)--(\x+64,57+\y)--(\x+64,54+\y)--(\x+66,54+\y)--(\x+66,52+\y)--(\x+69,52+\y)--(\x+69,53+\y)--(\x+70,53+\y)--(\x+70,56+\y)--(\x+68,56+\y)--(\x+68,58+\y)--(\x+65,58+\y)--(\x+65,59+\y)--(\x+64,59+\y)--(\x+64,62+\y)
--(\x+62,62+\y)--(\x+62,64+\y)--(\x+59,64+\y)--(\x+59,65+\y)--(\x+58,65+\y)--(\x+58,68+\y)--(\x+56,68+\y)--(\x+56,70+\y)--(\x+53,70+\y)--(\x+53,71+\y)--(\x+51,71+\y)--(\x+51,69+\y)--(\x+52,69+\y)--(\x+52,66+\y)--(\x+54,66+\y)--(\x+54,64+\y)--(\x+57,64+\y)--(\x+57,63+\y)--(\x+58,63+\y)--(\x+58,60+\y)--(\x+60,60+\y);
}


\draw [fill=lime] (69,53)--(69,51)--(70,51)--(70,48)--(72,48)--(72,46)--(75,46)--(75,47)--(76,47)--(76,50)--(74,50)--(74,52)--(71,52)--(71,53)--(69,53);

\foreach \x in {-24}
\foreach \y in {24} 
{ 
\draw [fill=lime] (\x+69,53+\y)--(\x+69,51+\y)--(\x+70,51+\y)--(\x+70,48+\y)--(\x+72,48+\y)--(\x+72,46+\y)--(\x+75,46+\y)--(\x+75,47+\y)--(\x+76,47+\y)--(\x+76,50+\y)--(\x+74,50+\y)--(\x+74,52+\y)--(\x+71,52+\y)--(\x+71,53+\y)--(\x+69,53+\y);
}

\foreach \x in {7}
\foreach \y in {-7} 
{ 
\draw [fill=lime] (\x+69,53+\y)--(\x+69,51+\y)--(\x+70,51+\y)--(\x+70,48+\y)--(\x+72,48+\y)--(\x+72,46+\y)--(\x+75,46+\y)--(\x+75,47+\y)--(\x+76,47+\y)--(\x+76,50+\y)--(\x+74,50+\y)--(\x+74,52+\y)--(\x+71,52+\y)--(\x+71,53+\y)--(\x+69,53+\y);
}
\foreach \x in {13}
\foreach \y in {-13} 
{ 
\draw [fill=lime] (\x+69,53+\y)--(\x+69,51+\y)--(\x+70,51+\y)--(\x+70,48+\y)--(\x+72,48+\y)--(\x+72,46+\y)--(\x+75,46+\y)--(\x+75,47+\y)--(\x+76,47+\y)--(\x+76,50+\y)--(\x+74,50+\y)--(\x+74,52+\y)--(\x+71,52+\y)--(\x+71,53+\y)--(\x+69,53+\y);
}

\foreach \x in {25}
\foreach \y in {6} 
{ 
\draw [fill=lime] (\x+69,53+\y)--(\x+69,51+\y)--(\x+70,51+\y)--(\x+70,48+\y)--(\x+72,48+\y)--(\x+72,46+\y)--(\x+75,46+\y)--(\x+75,47+\y)--(\x+76,47+\y)--(\x+76,50+\y)--(\x+74,50+\y)--(\x+74,52+\y)--(\x+71,52+\y)--(\x+71,53+\y)--(\x+69,53+\y);
}
\foreach \x in {31}
\foreach \y in {0} 
{ 
\draw [fill=lime] (\x+69,53+\y)--(\x+69,51+\y)--(\x+70,51+\y)--(\x+70,48+\y)--(\x+72,48+\y)--(\x+72,46+\y)--(\x+75,46+\y)--(\x+75,47+\y)--(\x+76,47+\y)--(\x+76,50+\y)--(\x+74,50+\y)--(\x+74,52+\y)--(\x+71,52+\y)--(\x+71,53+\y)--(\x+69,53+\y);
}

\end{tikzpicture}
\end{center}
\caption{The tiling pattern.}\label{fig_pattern}
\end{figure}
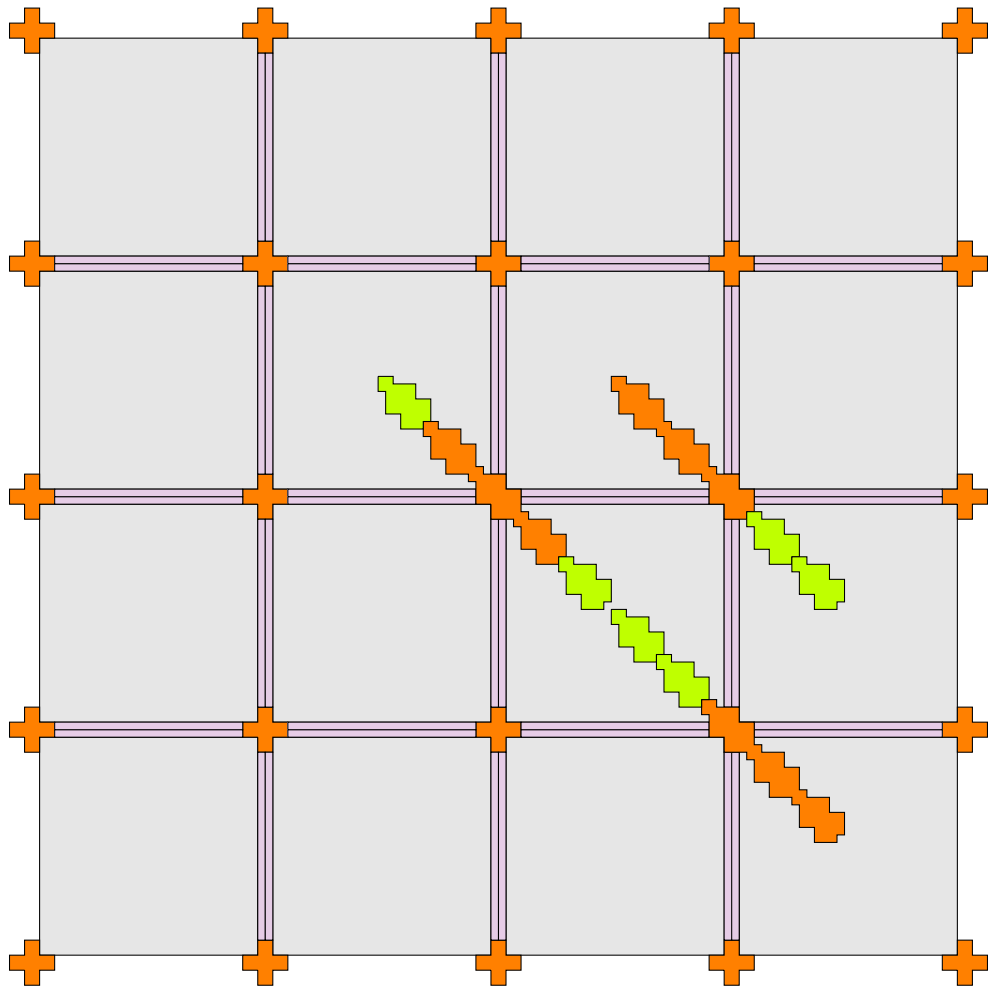

We prove that the pattern of Figure \ref{fig_pattern} must be adopted in order to tile the plane by the following steps of arguments.

\begin{itemize}
    \item Note that the fillers, the teeth, and the links have the same shape of bumps. Because of the existence of these bumps, we cannot tile the plane with only these three types of polyominoes. For example, to tile the location marked by a black dot next to a tooth (left of Figure \ref{fig_local}), we must put another tooth (or a link or a filler with this same shape of bumps). But after the placement of the second tile, the location marked by a red dot on the right of Figure \ref{fig_local} cannot be tile anymore.

    \item As a consequence, to tile the plane, we must use the jaw or the meat. If we use the jaw, then we must also use the meat, because the jaw has $T$-shape dents which can only be matched by the $T$-shape bumps of the meat. So in all, we must use meat in tiling.


\begin{figure}[H]
\begin{center}
\begin{tikzpicture}[scale=0.3]

\foreach \x in {0,16}
\foreach \y in {0} 
{ 
\draw [fill=gray!20] (\x+0,0+\y)--(\x+2,0+\y)
--(\x+2,-1+\y)--(\x+1,-1+\y)--(\x+1,-2+\y)--(\x+4,-2+\y)--(\x+4,-1+\y)--(\x+3,-1+\y)
--(\x+3,0+\y)--(\x+5,0+\y)
--(\x+5,2+\y)--(\x+6,2+\y)--(\x+6,1+\y)--(\x+7,1+\y)--(\x+7,4+\y)--(\x+6,4+\y)--(\x+6,3+\y)--(\x+5,3+\y)
--(\x+5,5+\y)
--(\x+3,5+\y)--(\x+3,6+\y)--(\x+4,6+\y)--(\x+4,7+\y)--(\x+1,7+\y)--(\x+1,6+\y)--(\x+2,6+\y)--(\x+2,5+\y)
--(\x+0,5+\y)
--(\x+0,3+\y)--(\x+-1,3+\y)--(\x+-1,4+\y)--(\x+-2,4+\y)--(\x+-2,1+\y)--(\x+-1,1+\y)--(\x+-1,2+\y)--(\x+0,2+\y)
--(\x+0,0+\y);
}

\foreach \x in {23}
\foreach \y in {-2} 
{ 
\draw [fill=gray!20] (\x+0,0+\y)--(\x+2,0+\y)
--(\x+2,-1+\y)--(\x+1,-1+\y)--(\x+1,-2+\y)--(\x+4,-2+\y)--(\x+4,-1+\y)--(\x+3,-1+\y)
--(\x+3,0+\y)--(\x+5,0+\y)
--(\x+5,2+\y)--(\x+6,2+\y)--(\x+6,1+\y)--(\x+7,1+\y)--(\x+7,4+\y)--(\x+6,4+\y)--(\x+6,3+\y)--(\x+5,3+\y)
--(\x+5,5+\y)
--(\x+3,5+\y)--(\x+3,6+\y)--(\x+4,6+\y)--(\x+4,7+\y)--(\x+1,7+\y)--(\x+1,6+\y)--(\x+2,6+\y)--(\x+2,5+\y)
--(\x+0,5+\y)
--(\x+0,3+\y)--(\x+-1,3+\y)--(\x+-1,4+\y)--(\x+-2,4+\y)--(\x+-2,1+\y)--(\x+-1,1+\y)--(\x+-1,2+\y)--(\x+0,2+\y)
--(\x+0,0+\y);
}

\filldraw[black] (5.5,1.5) circle (.2);
\filldraw[red] (19.5,-0.5) circle (.2);

\draw  [->, line width=3,  red] (8,2.5)--(13,2.5);

\end{tikzpicture}
\end{center}
\caption{The local placement of teeth.}\label{fig_local}
\end{figure}
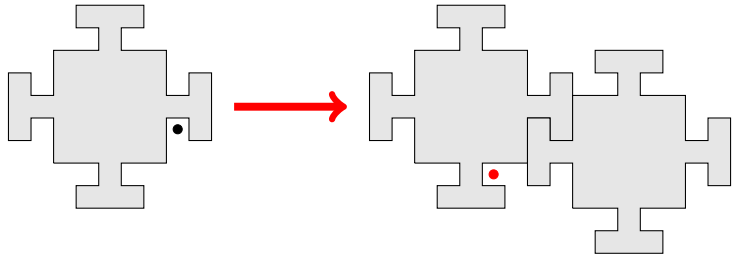

    \item As we have mentioned before, a meat can only be embraced from two opposite directions by two jaws, leaving a simulated Wang tile outside the jaws. Now, consider the dents of the simulated Wang tile outside the jaws. By a similar local argument as in Figure \ref{fig_local}, if those dents are to be matched by a tooth or a filler, the tiling comes to a dead end after placing one or two more polyominoes. So the only way to extend this tiling is to connect this meat outside the jaws to another meat by links. By repeatedly applying the same argument to the new meat, we extend the tiling to form the pattern illustrated in Figure \ref{fig_pattern}.
\end{itemize}

Finally, by the constraint of the length of the links, in order to tile the entire plane without overlaps or gaps with the pattern of Figure \ref{fig_pattern}, we must match a normal dent of one meat with a deeper dent with another meat by a link polyomino. According to our encoding method, this is equivalent to that the color of the two sides of the two simulated Wang tiles which are connected by the links must be the same. Therefore, the set of $8$ polyominoes can tile the plane if and only if the corresponding set of Wang tiles can tile the plane. This completes the proof.
\end{proof}

\section{Conclusion}\label{sec_con}

In this paper, we settle Ollinger's conjecture completely by showing that the $8$-polyomino tiling problem is undecidable. Two important techniques, namely a new orientation of the simulated Wang tiles, and a new encoding method, are applied to achieve the goal of decreasing the number of teeth and links at the same time. Going beyond Ollinger's conjecture, it is interesting to investigate whether the $k$-polyomino tiling problem is undecidable for some $k$ $(2 \leq k\leq 7)$.

\section*{Acknowledgements}
The first author was supported by the Research Fund of Guangdong University of Foreign Studies (Nos. 297-ZW200011 and 297-ZW230018), and the National Natural Science Foundation of China (No. 61976104).



\begin{thebibliography}{99}

\bibitem{ags92}
R. Ammann, B. Gr\"unbaum, G. C. Shephard, Aperiodic tiles, \textit{Discrete \& Computational Geometry}, \textbf{8}(1992), 1-25.

\bibitem{bg13}
M. Baake, U. Grimm, Aperiodic Order, Volume 1: A Mathematical Invitation, Cambridge University Press, 2013.

\bibitem{bn91}
D. Beauquier, M. Nivat, On translating one polyomino to tile the plane, \textit{Discrete \& Computational Geometry}, \textbf{6}(1991), 575-592.

\bibitem{b66} R. Berger, The undecidability of the domino problem, \textit{Memoirs of the American Mathematical Society}, \textbf{66}(1966), 1-72.


\bibitem{c96} K. Culik II, An aperiodic set of 13 wang tiles, \textit{Discrete Mathematics}, \textbf{160}(1996), 245-251.


\bibitem{g70}
S. W. Golomb, Tiling with a set of polyominoes, \textit{Journal of Combinatorial Theory}, \textbf{9}(1970), 60-71.


\bibitem{gt23}
R. Greenfeld, T. Tao, Undecidable translational tilings with only two tiles, or one nonabelian tile. \textit{Discrete \& Computational Geometry}, \textbf{70}(2023), 1652–1706.


\bibitem{gs16}
B. Gr\"unbaum, G. C. Shephard, Tilings and Patterns, 2nd Edition, Dover Publications, 2016.

\bibitem{jr21} E. Jeandel and M. Rao, An aperiodic set of 11 Wang tiles, \textit{Advances in Combinatorics}, (2021:1), 1-37.

\bibitem{kari96} J. Kari, A small aperiodic set of wang tiles, \textit{Discrete Mathematics}, \textbf{160}(1996) 259-264.

\bibitem{o09}
N. Ollinger, Tiling the plane with a fixed number of polyominoes, In: A.H. Dediu, A.M. Ionescu, C. Martín-Vide (eds), Language and Automata Theory and Applications (LATA 2009). Lecture Notes in Computer Science, vol 5457. Springer, Berlin, Heidelberg, 638-649.

\bibitem{p79}
R. Penrose, Pentaplexity a class of non-periodic tilings of the plane, \textit{The Mathematical Intelligencer}, \textbf{2}(1979), 32–37.

\bibitem{r71} R. Robinson, Undecidability and nonperiodicity for tilings of the plane, \textit{Inventiones Mathematicae}, \textbf{12}(1971), 177--209.


\bibitem{smith23a} D. Smith, J. S. Myers, C. S. Kaplan, C. Goodman-Strauss, An aperiodic monotile, arXiv:2303.10798 [math.CO]

\bibitem{smith23b} D. Smith, J. S. Myers, C. S. Kaplan, C. Goodman-Strauss, A chiral aperiodic monotile, arXiv:2305.17743 [math.CO]


\bibitem{wang61}
H. Wang, Proving theorems by pattern recognition- II, \textit{Bell System Technical Journal}, \textbf{40}(1961) 1-41.


\bibitem{w15} A. Winslow, An optimal algorithm for tiling the plane with a translated polyomino,
In: K. Elbassioni, K. Makino (eds), Algorithms and Computation (2015), Springer, Berlin, Heidelberg, 3-13.

\bibitem{yang23} C. Yang, Tiling the plane with a set of ten polyominoes, \textit{International Journal of Computational Geometry \& Applications}, \textbf{33}(03n04)(2023), 55-64.

\bibitem{yang23b} C. Yang, On the undecidability of tiling the plane with a set of $9$ polyominoes, a manuscript in Chinese (2023).
 
\end{thebibliography}
\end{document}